\providecommand{\keywords}[1]{\textbf{\textit{Keywords:}} #1}
\newcommand{\blue}[1]{\textcolor{blue}{#1}}
\def\reals{\mathbb{R}} 
\def\norm#1{\left\|{#1}\right\|} 
\newcommand{\inner}[1]{{\langle #1 \rangle}} 
\def\normal{{\sf N}}
\newcommand{\grad}{\nabla}
\newcommand{\E}{\mathbb{E}}
\providecommand{\argmin}{\mathop\mathrm{arg min}}
\newtheorem{theorem}{Theorem}
\newtheorem{lemma}[theorem]{Lemma}
\renewenvironment{proof}{\noindent\textbf{Proof.}\hspace*{.3em}}{\qed\\}
\newenvironment{proof-sketch}{\noindent\textbf{Proof Sketch}
  \hspace*{0.em}}{\qed\bigskip\\}
\newenvironment{proof-idea}{\noindent\textbf{Proof Idea}
  \hspace*{0.em}}{\qed\bigskip\\}
\newenvironment{proof-of-lemma}[1][{}]{\noindent\textbf{Proof of Lemma {#1}.}
  \hspace*{0.em}}{\qed\\}
\newenvironment{proof-of-corollary}[1][{}]{\noindent\textbf{Proof of Corollary {#1}.}
  \hspace*{0.em}}{\qed\\}
\newenvironment{proof-of-theorem}[1][{}]{\noindent\textbf{Proof of Theorem {#1}.}
  \hspace*{0.em}}{\qed\\}
\newenvironment{proof-attempt}{\noindent\textbf{Proof Attempt}
  \hspace*{0.em}}{\qed\bigskip\\}
\newtheorem{proposition}[theorem]{Proposition}
\newtheorem{assumption}{Assumption}
\newcommand\numberthis{\addtocounter{equation}{1}\tag{\theequation}}
\title{Improved Complexities for Stochastic Conditional Gradient Methods under Interpolation-like Conditions}
\author{Tesi Xiao\thanks{Department of Statistics, University of California, Davis. \texttt{texiao@ucdavis.edu}.  } 
\and Krishnakumar Balasubramanian\thanks{Corresponding author. One Shields Avenue, Department of Statistics, University of California, Davis, CA 95616 \texttt{kbala@ucdavis.edu}. } 
\and Saeed Ghadimi\thanks{Department of Management Sciences, University of Waterloo. \texttt{sghadimi@uwaterloo.ca}.}
}
\begin{document}
\maketitle

\begin{abstract}
We analyze stochastic conditional gradient methods for constrained optimization problems arising in over-parametrized machine learning. We show that one could leverage the interpolation-like conditions satisfied by such models to obtain improved oracle complexities. Specifically, when the objective function is convex, we show that the conditional gradient method requires $\mathcal{O}(\epsilon^{-2})$ calls to the stochastic gradient oracle to find an $\epsilon$-optimal solution. Furthermore, by including a gradient sliding step, we show that the number of calls reduces to $\mathcal{O}(\epsilon^{-1.5})$.
\end{abstract}

\keywords{Stochastic Conditional Gradient, Oracle Complexity, Overparametrization, Zeroth-order Optimization.}

\section{Introduction}
Consider the following constrained stochastic optimization problem: 
\begin{equation}\label{eq:sopt}
    \underset{x \in \Omega}{\min}\quad \left\{f(x):= \E_\xi \left[ F(x, \xi)\right] \right\},
\end{equation}
where $f: \mathbb{R}^d \to \mathbb{R}$ and $\Omega \subset \mathbb{R}^d$ is a closed and convex set and $\xi$ is a random vector characterizing the stochasticity in the problem. In a machine learning setup, the function $F$ could be interpreted as the loss function associated with a sample $\xi$ and the function $f$ could represent the risk, which is defined as the expected loss. Such constrained stochastic optimization problems arise frequently in statistical machine learning applications. The conditional gradient algorithm,  also called as the Frank-Wolfe algorithm, is an efficient method for solving constrained optimization problems of the form in~\eqref{eq:sopt} due to their projection-free nature~\cite{jaggi2013revisiting, harchaoui2015conditional, freund2017extended, lan2017conditional, berrada2018deep, ravi2018constrained}. In each step of the conditional gradient method,  it is only required to minimize a linear objective over the set $\Omega$. This operation could be implemented efficiently for a variety of sets arising in statistical machine learning,  compared to the operation of projecting onto the set $\Omega$, which is required for example by the projected gradient method. Hence, the conditional gradient method has regained popularity in the last decade in the optimization and machine learning community.

There has been extensive work in the past decade on analyzing the stochastic conditional gradient algorithm for optimization problems of the form in~\eqref{eq:sopt}; see for example~\cite{garber2013linearly, hazan2016variance, lan2016conditional,reddi2016stochastic, ghadimi2019conditional}.  However, existing works do not take into account certain favorable structures that are naturally available in modern over-parametrized machine learning problems. \textcolor{black}{Specifically, it has been noted that modern machine learning models predict well on unseen data, despite fitting the training data perfectly~\cite{zhang2016understanding, huang2017densely, liang2018just, ma2018power, montanari2019generalization, hastie2019surprises}}. Examples include logistic regression or support vector machine with squared-hinge loss that are trained with linearly separable data~\cite{vaswani2019fast,vaswani2019painless,meng2019fast} and deep neural networks~\cite{vaswani2019fast,bassily2018exponential}. From an optimization point of view, for the problem in~\eqref{eq:sopt} with $\Omega\equiv\mathbb{R}^d$, the above interpolation condition means that at the optimal point, the gradient is not only zero (or close to zero) with respect to the risk function $f$ but is also almost surely equal to zero for the random loss function $F$. Such a scenario helps to reduce the stochasticity in the gradient estimation process which in turn results in improved complexity results for several stochastic optimization procedures. Indeed in the recent past, several works have provided improved rates for algorithms like stochastic gradient descent~\cite{needell2014stochastic, ma2018power, bassily2018exponential, gower2019sgd, vaswani2019fast, vaswani2019painless} and sub-sampled Newton's method~\cite{meng2019fast}. In particular, for several settings, the above works demonstrate that the stochastic algorithm may perform as well as the corresponding deterministic counterpart. However, such works only study unconstrained optimization problems and do not have any consequences for constrained stochastic optimization problems of the form in~\eqref{eq:sopt}.

Hence, in this work we consider the following question: \emph{Can we obtain improvements in the oracle complexity of algorithms used for projection-free constrained stochastic optimization problems arising in the context of \textcolor{black}{over-parametrized machine learning models, that are capable of perfectly interpolating the training data}?}  We give a positive answer to the above question by demonstrating that the stochastic conditional gradient method, a projection-free technique for solving constrained stochastic optimization problems, also enjoy improved oracle complexities when they are used to solve constrained stochastic optimization problems of the form in~\eqref{eq:sopt} under certain \emph{interpolation-like} conditions. We elaborate on the specific form of improvement observed below. For stochastic conditional gradient algorithms, the oracle complexity is measured in terms of number of calls to the Stochastic First-order Oracle (SFO) and the Linear Minimization Oracle (LMO) used to the solve the subproblems \textcolor{black}{(that are of the form of minimizing a linear function over the convex feasible set)} arising in the algorithm. \textcolor{black}{In this work, we make the following contribution to the literature on conditional gradient methods under interpolation-like assumptions (see Section~\ref{sec:assumptions} for the exact definitions) on the stochastic gradient:}
\begin{enumerate}[leftmargin=20pt,noitemsep]
\item For the case of convex $f$ in~\eqref{eq:sopt}, we show that the number of calls to the SFO for the \emph{vanilla} stochastic conditional gradient method and stochastic conditional gradient sliding methods are given respectively by $\mathcal{O}(\epsilon^{-2})$ and $\mathcal{O}(\epsilon^{-1.5})$. For comparison, without such assumptions, the corresponding complexities are $\mathcal{O}(\epsilon^{-3})$ and $\mathcal{O}(\epsilon^{-2})$ respectively. The number of calls to the linear minimization oracle (LMO) is of the order $\mathcal{O}(\epsilon^{-1})$, in both cases.
\item We also demonstrate similar improvements in the context of zeroth-order conditional gradient methods, where one only observes noisy evaluations of the function being optimized. Specifically, the number of calls to the stochastic zeroth-order oracle for the \emph{vanilla} stochastic conditional gradient method and stochastic conditional gradient sliding methods are given respectively by $\mathcal{O}(d\epsilon^{-2})$ and $\mathcal{O}(d\epsilon^{-1.5})$, with the same LMO complexity as the first-order setting.
\end{enumerate}
We emphasize that, notably the above improvements are achieved without incorporating any double-loop based existing variance reduction techniques, for example SVRF~\cite{reddi2016stochastic} or SPIDER-FW~\cite{yurtsever2019conditional}. It is also worth noting that ~\cite{defazio2019ineffectiveness,schmidt2020} argue that variance reduction techniques (at the least existing ones) are ineffective in the context of modern deep learning models which are invariably over-parametrized. We also remark that, in contrast to stochastic gradient methods for unconstrained optimization~\cite{vaswani2019fast, bassily2018exponential}, the above improved results  still do not match the corresponding deterministic rates highlighting the subtlety with projection-free optimization.

\section{Preliminaries and Assumptions}\label{sec:assumptions}
We now list and discuss the set of assumptions made in our work. We first list some regularity assumptions on the function $f$ and the set $\Omega$.
\begin{assumption}\label{ass:constrained}
The function $f$ has L-Lipschitz gradient $\grad f$, i.e., for any pair of points $x,y\in \Omega$, we have $ \norm{\grad f(x)- \grad f(y)} \leq L \norm{x-y}$, and the feasible set $\Omega\subset\reals^d$ is bounded, i.e.,$\underset{x,y \in \Omega}{\max}  ~\norm{x-y}\leq D$.
\end{assumption}
\noindent The above set of assumptions are standard in the analysis of stochastic conditional gradient methods and has been used in prior works in the literature; see for example~\cite{ghadimi2013stochastic}. We make the above assumptions for both the first-order setting. We also require the following smoothness assumption in the zeroth-order setting.
\begin{assumption}\label{ass:smooth-F}
The function $F$ has Lipschitz continuous gradient with constant $L$, almost surely for any $\xi$, i.e., for any $x,y\in \mathbb{R}^d$, i.e., almost surely we have $\norm{\grad F(x,\xi) - \grad F(y,\xi)} \leq L\norm{x-y}$.
\end{assumption}

Note that the above assumption is stronger than the first statement of Assumption~\ref{ass:constrained} and implies it. However, we only use Assumption~\ref{ass:smooth-F} for the analysis of zeroth-order algorithms.

\subsection{Growth Conditions in the Convex Constrained Setting}
We now state the main interpolation-like assumptions that we make in our work when $f$ is convex and provide the main intuition behind such an assumption.

\begin{assumption}[Moment-based Weak Growth Condition] \label{ass:WGC-moment}
Let $x^*$ be the minimum point of $f$. We say that $f$ satisfies the Moment-based Weak Growth Condition (WGC) with constant $\rho$, if for any point $x\in \Omega$, we have
\begin{equation}
    \E_\xi \norm{\grad F(x,\xi)}^2 \leq 2\rho L \left[f(x)-f(x^*)\right].
\end{equation}
\end{assumption}

\begin{assumption}[Variance-based Weak Growth Condition]\label{ass:WGC-variance}
Let $x^*$ be the minimum point of $f$. We say that the function $f$ satisfies the Variance-based Weak Growth Condition (WGC) with constant $\rho$, if for any point $x\in \Omega$, we have
\begin{equation}
    \E_\xi \norm{\grad F(x,\xi)  - \grad f(x) }^2 \leq 2\rho L \left[f(x)-f(x^*)\right].
\end{equation}
\end{assumption}

The above conditions are motivated by the so-called strong growth condition: $\E \|\nabla F(x, \xi) \|^2 \leq \rho \| \nabla f(x) \|^2$, used in \cite{vaswani2019fast} for obtaining faster rates of convergence for stochastic gradient method in the unconstrained setting. Notice that in the interpolation setting, when $\nabla f(x^*) = 0$, we have $\nabla F(x^*, \xi) = 0$, almost surely. Thus, the strong growth condition is defined exactly to take advantage of this situation. Furthermore, in the smooth convex setting,~\cite{vaswani2019fast} showed that the strong-growth condition is equivalent to the moment-based weak growth condition in Assumption~\ref{ass:WGC-moment}. However, the moment-based weak growth condition as proposed in \cite{vaswani2019fast} is not directly suited for the constrained stochastic setting that we consider in this work. It is easy to construct examples for which there exists stationary point at the boundary of $\Omega$ with non-zero (stochastic) gradient, i.e., $\E \|\nabla F(x, \xi) \|^2 $ could remain positive while the right hand side goes to $0$ and hence the assumption is not satisfied. In order to resolve this issue, for the constrained setting, we relax the moment-based growth conditions to the variance-based versions. Note that we have 
\begin{equation*}
\E \| \nabla F(x, \xi) - \nabla f(x)  \|^2 = \E \| \nabla F(x, \xi) \|^2 - \|  \nabla f(x) \|^2 \leq \E \| \nabla F(x, \xi) \|^2.
\end{equation*}
Thus variance-based growth conditions naturally become the substitute for the moment-based version in constrained problems and could hold even the moment-based conditions do not hold. As they are also motivated by the interpolation assumption, we refer to these conditions as interpolation-like conditions. Formally, under the variance-based growth conditions for a convex $f$, if we attain \blue{an} optimal point $x^*\in\Omega$, the variance of the stochastic first-order oracle will be almost surely zero, i.e., $\nabla F(x^*, \xi) = \nabla f(x^*)$ almost surely. This property eventually leads to the improvements in the query complexity that we demonstrate. We emphasize that it is natural to construct counter-examples that violate Assumption~\ref{ass:WGC-variance}. In those cases, the improved query complexities that we demonstrate are simply not applicable. Finally, we also have the following natural relationships between the two conditions.

\begin{proposition}\label{prop:WGC}
The Weak Growth Conditions defined above have the following relations: 
\begin{itemize}[leftmargin=0.2cm, align=left, topsep=1pt, partopsep=1pt, itemsep=1pt,parsep=1pt]
	\item[(a)] If $f$ satisfies the Moment-based WGC \eqref{ass:WGC-moment} with $\rho$, then $f$ satisfies the Variance-based WGC \eqref{ass:WGC-variance} with $\rho$ and there exists $x^*\in \Omega$ such that $\nabla f(x^*) = 0$.
	\item[(b)] If $f$ satisfies the Variance-based WGC \eqref{ass:WGC-variance} with $\rho$ and there exists $x^*\in \Omega$ such that $\nabla f(x^*) = 0$, then $f$ satisfies the Moment-based WGC \eqref{ass:WGC-moment} with $\rho+1$.
\end{itemize}
\end{proposition}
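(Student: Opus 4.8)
The plan is to derive both directions from the bias--variance decomposition of the stochastic gradient, combined with the standard descent lemma for $L$-smooth functions. Write $g(x) := \nabla F(x,\xi)$, so that $\E_\xi g(x) = \nabla f(x)$ and the pointwise identity $\E_\xi \|g(x)\|^2 = \E_\xi\|g(x) - \nabla f(x)\|^2 + \|\nabla f(x)\|^2$ holds for every $x \in \Omega$; this is exactly the identity already displayed just before the proposition statement.

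For part (a), assume the Moment-based WGC holds with constant $\rho$. Discarding the nonnegative term $\|\nabla f(x)\|^2$ in the identity above immediately gives $\E_\xi\|g(x) - \nabla f(x)\|^2 \leq \E_\xi\|g(x)\|^2 \leq 2\rho L[f(x) - f(x^*)]$ for all $x \in \Omega$, which is the Variance-based WGC with the same constant $\rho$. To obtain $\nabla f(x^*) = 0$, evaluate the Moment-based WGC at $x = x^*$: the right-hand side vanishes, so $\E_\xi\|\nabla F(x^*,\xi)\|^2 = 0$, hence $\nabla F(x^*,\xi) = 0$ almost surely, and in particular $\nabla f(x^*) = \E_\xi \nabla F(x^*,\xi) = 0$.

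For part (b), assume the Variance-based WGC with constant $\rho$ together with the existence of $x^* \in \Omega$ such that $\nabla f(x^*) = 0$. Since $f$ is convex in this setting, $\nabla f(x^*) = 0$ upgrades $x^*$ to a global minimizer of $f$ over all of $\mathbb{R}^d$. Applying the descent lemma $f(y) \leq f(x) + \langle \nabla f(x), y - x\rangle + \tfrac{L}{2}\|y - x\|^2$ at the point $y = x - \tfrac{1}{L}\nabla f(x)$ and using $f(x^*) \leq f(y)$ yields the familiar bound $\|\nabla f(x)\|^2 \leq 2L[f(x) - f(x^*)]$ for every $x$. Substituting this bound and the Variance-based WGC into the bias--variance identity gives $\E_\xi\|\nabla F(x,\xi)\|^2 \leq 2\rho L[f(x) - f(x^*)] + 2L[f(x) - f(x^*)] = 2(\rho+1)L[f(x) - f(x^*)]$, i.e., the Moment-based WGC with constant $\rho + 1$.

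I do not expect a genuine obstacle here; the only points needing care are (i) the interpretation of $x^*$ as the constrained minimizer, with the observation that each set of hypotheses promotes it to an unconstrained stationary/minimal point, and (ii) invoking the descent-lemma bound $\|\nabla f(x)\|^2 \leq 2L[f(x)-f(x^*)]$ only after $x^*$ is known to be a global minimizer over $\mathbb{R}^d$ --- which is precisely why the extra hypothesis $\nabla f(x^*) = 0$ must be assumed in part (b) whereas it is automatically recovered in part (a).
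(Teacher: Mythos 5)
Your proof is correct and follows essentially the same route as the paper's: the bias--variance identity $\E_\xi\|\nabla F(x,\xi)\|^2 = \E_\xi\|\nabla F(x,\xi)-\nabla f(x)\|^2 + \|\nabla f(x)\|^2$ for part (a) together with evaluating the moment condition at $x^*$, and the descent-lemma bound $\|\nabla f(x)\|^2 \le 2L[f(x)-f(x^*)]$ for part (b). Your justification of why $f(x^*) \le f\bigl(x - \tfrac{1}{L}\nabla f(x)\bigr)$ --- upgrading $x^*$ to an unconstrained global minimizer via convexity and $\nabla f(x^*)=0$ --- is in fact cleaner than the paper's remark at that step.
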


\subsection{Growth Conditions in the Zeroth-Order Constrained Setting}

In the zeroth-order setting, we only assume availability of the noisy function evaluations. This oracle setting is motivated by several applications where only noisy function queries of problem \eqref{eq:sopt} is available, such as reinforcement learning~\cite{salimans2017evolution, choromanski2018structured, choromanski18a}, hyperparameter tuning~\cite{snoek2012practical}, and black-box attacks to deep networks \cite{chen2017zoo, sahu2019towards}. Hence, we use the Gaussian Stein's identity based random gradient estimator, a standard gradient estimator in the zeroth-order optimization literature~\cite{ghadimi2013stochastic, duchi2015optimal, nesterov2017random,balasubramanian2018zeroth}:
\begin{equation*}
\displaystyle
    \bar{G}_{\nu}(x) = \frac{1}{b}  \sum_{j=1}^{b} \frac{F(x + \nu u_{j}, \xi_{j}) -  F(x, \xi_{j}) }{\nu} u_{j},
\end{equation*}
where $u_{1},\dots, u_{b}$ are i.i.d. samples from $\normal(0, \mathbf{I}_d)$. The above gradient estimator is a biased estimator of the true gradient $\nabla f(x)$, and was also used in \cite{balasubramanian2018zeroth}, to develop zeroth-order conditional gradient descent algorithms.

While for the first-order setting, we use the relatively weaker variance-based conditions to obtain the improved bounds, in the zeroth-order setting, it turns out the stronger moment-based conditions are required. The reason is that the mean square error of the biased zeroth-order gradient estimator is bounded above by $\E \| \grad F(x,\xi) \|^2$. Hence, to obtain improved rates, it makes it necessary to make assumptions on the moments of the stochastic gradient directly. We emphasize that this is required only for the constrained problems, since the moment-based conditions are equivalent to the variance-based conditions when there exists one zero-gradient point in the constraint set (see Proposition~\ref{prop:WGC}). In particular, we show in Appendix~\ref{sec:sgdproofs} that a zeroth-order version of Theorem 3 from~\cite{vaswani2019fast}, for stochastic gradient descent, to bound the gradient size in the nonconvex setting could be proved just under the variance-based growth conditions. 

\subsection{Motivating Examples}
Before we present our main results in the next section, we briefly discuss some motivating examples of constrained stochastic optimization problems that arise in modern machine learning. In the convex setting, it is easy to see that kernel regression~\cite{liang2018just}, squared-Hinge loss based linear SVM classifier or logistic regression on linearly separable data could be considered as operating in the over-parametrized regime and hence satisfy interpolation-like conditions~\cite{vaswani2019fast, meng2019fast}. 
 
However, without any constraints, such predictors might be biased against certain sensitive features like race or gender. One way to build fair predictors is to explicitly encode fairness constraints with respect to certain pre-defined sensitive features~\cite{donini2018empirical,agarwal2018reductions}. Specifically, it was shown in~\cite{agarwal2018reductions} that several standard and well-accepted notions of fairness  in classification setting, including equalized odds~\cite{hardt2016equality}, demographic parity~\cite{dwork2018decoupled}, balance for the negative class~\cite{kleinberg2016inherent}, treatment equality~\cite{berk2018fairness} could be formulated as empirical risk minimization problems subjected linear inequality constraints. In this case, the problem is exactly of the form in~\eqref{eq:sopt} with $\Omega$ being a polytope. Furthermore,~\cite{donini2018empirical} also proposed a general approach for fair empirical risk minimization. Similar to~\cite{agarwal2018reductions}, the fundamental idea is to enforce constraints such that the conditional risk of a predictor is not varying much with respect to the sensitive features associated with the problem. Such formulations of fair empirical risk minimization in the interpolation regime also fall under the class of problems in~\eqref{eq:sopt}.

\textbf{Squared hinge loss with linearly separable data.} As a concrete example, we extend the unconstrained examples presented in \cite{vaswani2019fast} to the constrained setting we consider. Assuming a finite support of features and the linearly separable data, it has been shown that the squared-hinge loss satisfies SGC with $\rho = {c}/{\tau^2}$ where $c$ is the cardinality of the support and $\tau$ is the margin (Lemma 1 in \cite{vaswani2019fast}). In the above regime, the optimal classifier that minimizes the loss and achieves a stationary point with zero gradient is not always unique. In practice, to construct a fair classifier, enforcing constraints is a natural approach. Note that if there exists an $x^*\in \Omega$, by the convexity and the L-smoothness of $f$, we have
\begin{equation}
    \| \nabla f(x) \|^2 \leq 2L(f(x) - f(x^*)).
\end{equation}
That is to say, for linearly separable data with margin $\tau$ and a finite support of size $c$, if there exists one $x^*\in\Omega$, the squared-hinge loss satisfies Assumption \ref{ass:WGC-moment} with $\rho ={c}/{\tau^2}$.

\section{Improved Complexities for Stochastic Conditional Gradient Methods}
We now provide improved complexities for stochastic conditional gradient methods under the interpolation-like assumption in Section~\ref{sec:assumptions}. For convenience, we first introduce the following mini-batch stochastic gradients with first-order and zeroth-order oracle access: at $t$-th iteration, we uniformly pick i.i.d. samples $\{\xi_{t,1}, \dots, \xi_{t, b_t}\}$ and  estimate the gradient by
\begin{equation*}
	\Tilde{\nabla}_t := \frac{1}{b_t} \sum_{i=1}^{b_t} \nabla F(x_{t-1}, \xi_{t,i}), \quad
	\bar{G}_{\nu}^{t} := \frac{1}{b_t}  \sum_{j=1}^{b_t} \frac{F(x_{t-1} + \nu u_{t,j}, \xi_{t,j}) -  F(x_{t-1}, \xi_{t,j}) }{\nu} u_{t,j}
\end{equation*}
where $u_{t,1},\dots, u_{t.b_t}$ are i.i.d. samples from $\normal(0, \mathbf{I}_d)$.

\subsection{Stochastic Frank-Wolfe}

In this section, we studied the oracle complexity of the vanilla stochastic Frank-Wolfe algorithm under the weak interpolation-like conditions in Assumption~\ref{ass:WGC-variance} and~\ref{ass:WGC-moment}.

\begin{algorithm}[tb]
\caption{Stochastic Frank-Wolfe}
\begin{algorithmic}
	\STATE \textbf{Input:} $x_0\in \Omega$, number of iterations $T$, $\gamma_t \in [0,1]$, minibatch size $b_t$
    \FOR{$t = 1, 2, \dots, T$}
        \STATE Compute the gradient $g_t$ as follows:
        \begin{itemize}[topsep=1pt, partopsep=1pt, itemsep=1pt,parsep=1pt]
        \item[]  Set $g_t = \Tilde{\nabla}_t$ (for the first-order setting).
        \item[] Set $g_t = \bar{G}_{\nu}^{t}$ (for the zeroth-order setting).
        \end{itemize}
        \STATE Compute $d_t = \argmin_{d\in \Omega}  \inner{d, g_t}$
        \STATE $x_{t} = x_{t-1} + \gamma_t(d_t-x_{t-1})$
    \ENDFOR
    \STATE \textbf{Output:} $x_T$
\end{algorithmic}
\label{alg:SFWFandZ}
\end{algorithm}

\begin{theorem}\label{thm:sfw-convex-wgc}
Consider solving problem~\eqref{eq:sopt}, by Algorithm~\ref{alg:SFWFandZ}, under Assumption~\ref{ass:constrained} with $f$ being convex.
\begin{itemize}[leftmargin=0.2cm,align=left, topsep=1pt, partopsep=1pt, itemsep=1pt,parsep=1pt]
	\item[(a)] Assuming access to stochastic first-order oracle, under Assumption~\ref{ass:WGC-variance},
setting
\begin{equation*}
	\gamma_t = \frac{4}{t+3},\quad b_t = \lceil (t+3)/2 \rceil,
\end{equation*}
we have the following convergence rate:
\begin{equation*}
     \E[f(x_t) - f(x^*)] \leq \frac{2(f(x_0) - f(x^*))+8(\rho+1) LD^2}{t+3}.
\end{equation*}
Hence, the total number of calls to the stochastic first-order oracle and linear minimization oracle required to be
solved to find an $\epsilon$-optimal point of problem \eqref{eq:sopt} are, respectively, bounded by 
\begin{equation*}
	\mathcal{O}\left(\epsilon^{-2}\right),\quad \mathcal{O}\left(\epsilon^{-1}\right).
\end{equation*} 
\item[(b)]  Assuming access to stochastic zeroth-order oracle, under Assumptions~\ref{ass:WGC-moment}~and~\ref{ass:smooth-F}, setting 
\begin{align*}
	\gamma_t = \frac{4}{t+3},\quad  b_t = (t+3)(d+4),\quad  \nu = \frac{ D} {(T+3)(d+6)^{3/2}}
\end{align*}
we have
\begin{equation*}
	\E[f(x_t) - f(x^*)] \leq \frac{2(f(x_0) - f(x^*))+8(\rho + \rho^{-1}+1) LD^2}{t+3}.
\end{equation*}
Hence, the total number of calls to the stochastic zeroth-order oracle and linear minimization oracle required to be
solved to find an $\epsilon$-optimal point of problem \eqref{eq:sopt} are, respectively, bounded by
\begin{equation*}
	\mathcal{O}\left(d\epsilon^{-2}\right), \quad \mathcal{O}\left(\epsilon^{-1}\right).
\end{equation*}
\end{itemize}
\end{theorem}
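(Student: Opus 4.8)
The plan is to run the classical Frank--Wolfe descent argument, turn the stochastic noise into an $O(\gamma_t^2)$ perturbation using the growth conditions together with the growing batch sizes, and then close a one-step recursion on $\delta_t:=\E[f(x_t)-f(x^*)]$ by induction. First I would write the descent step: by $L$-smoothness and $\norm{d_t-x_{t-1}}\le D$ (Assumption~\ref{ass:constrained}) and the update rule,
\[
 f(x_t)\le f(x_{t-1})+\gamma_t\langle \grad f(x_{t-1}),d_t-x_{t-1}\rangle+\tfrac{L\gamma_t^2D^2}{2}.
\]
Splitting $\langle \grad f(x_{t-1}),d_t-x_{t-1}\rangle=\langle g_t,d_t-x_{t-1}\rangle+\langle \grad f(x_{t-1})-g_t,d_t-x_{t-1}\rangle$, using the LMO optimality $\langle g_t,d_t-x^*\rangle\le 0$ and convexity $\langle \grad f(x_{t-1}),x^*-x_{t-1}\rangle\le f(x^*)-f(x_{t-1})$, I obtain, with $h_t:=f(x_t)-f(x^*)$,
\[
 h_t\le (1-\gamma_t)h_{t-1}+\gamma_t\langle \grad f(x_{t-1})-g_t,d_t-x^*\rangle+\tfrac{L\gamma_t^2D^2}{2}.
\]
Because $d_t$ depends on $g_t$, the inner product cannot be eliminated by a conditional expectation; instead I bound it by $D\norm{\grad f(x_{t-1})-g_t}$ via Cauchy--Schwarz and the diameter, so only the $L^1$ estimation error is needed, which I control through Jensen by the conditional mean square error.

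For part~(a), $g_t$ is unbiased, so that conditional MSE is $b_t^{-1}\E_\xi\norm{\grad F(x_{t-1},\xi)-\grad f(x_{t-1})}^2\le 2\rho L h_{t-1}/b_t$ by Assumption~\ref{ass:WGC-variance}; taking full expectations and using $\E\sqrt{h_{t-1}}\le\sqrt{\delta_{t-1}}$ gives $\delta_t\le(1-\gamma_t)\delta_{t-1}+\gamma_t D\sqrt{2\rho L\delta_{t-1}/b_t}+L\gamma_t^2D^2/2$. Young's inequality on the middle term, together with the choices $\gamma_t=4/(t+3)$ and $b_t=\lceil(t+3)/2\rceil$ (which make $\gamma_t/b_t\le\gamma_t^2/2$), collapses this to
\[
 \delta_t\le\Bigl(1-\tfrac{\gamma_t}{2}\Bigr)\delta_{t-1}+\tfrac{(\rho+1)LD^2\gamma_t^2}{2}=\tfrac{t+1}{t+3}\,\delta_{t-1}+\tfrac{8(\rho+1)LD^2}{(t+3)^2}.
\]
Checking the base case at $t=1$ directly ($\gamma_1=1$) and then inducting yields $\delta_t\le C/(t+3)$ with $C=2(f(x_0)-f(x^*))+8(\rho+1)LD^2$ — the inductive step reduces to $8(\rho+1)LD^2\cdot\frac{t+2}{t+3}\le C$, which holds. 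The complexities then follow: the LMO is queried once per iteration, so $T=O(\epsilon^{-1})$ LMO calls suffice, while the SFO is queried $\sum_{t\le T}b_t=\Theta(T^2)=O(\epsilon^{-2})$ times.

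For part~(b) the skeleton is identical; the only new ingredient is the bias of the Gaussian-smoothing estimator $g_t=\bar G^t_\nu$. I would decompose $\grad f(x_{t-1})-g_t$ into the deterministic bias $\grad f(x_{t-1})-\grad f_\nu(x_{t-1})$, where $f_\nu$ is the Gaussian smoothing of $f$, and a mean-zero part, and invoke the standard zeroth-order estimates $\norm{\grad f_\nu-\grad f}\lesssim \nu L d^{3/2}$ and $\E_u[((F(x+\nu u,\xi)-F(x,\xi))/\nu)^2\norm{u}^2]\lesssim (d+4)\norm{\grad F(x,\xi)}^2+\nu^2L^2d^3$, both of which require Assumption~\ref{ass:smooth-F}; combined with the moment growth condition $\E_\xi\norm{\grad F(x,\xi)}^2\le 2\rho Lh_{t-1}$ (Assumption~\ref{ass:WGC-moment}), this makes the conditional MSE of $g_t$ at most a sum whose only non-$O(\gamma_t^2LD^2)$ piece, under $b_t=(t+3)(d+4)$ and $\nu=D/((T+3)(d+6)^{3/2})$, is of the form $\rho L h_{t-1}/((t+3)(d+4))$; this surviving piece is handled by Young exactly as in part~(a), and the induction from the previous paragraph applies verbatim with $\rho+1$ replaced by $\rho+\rho^{-1}+1$ (the $\rho^{-1}$ arising from balancing the bias/discretization terms against $\rho$ in Young's inequality). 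Again $T=O(\epsilon^{-1})$ LMO calls and $\sum_{t\le T}b_t=\Theta(dT^2)=O(d\epsilon^{-2})$ zeroth-order calls.

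I expect the main obstacle to be the error-control step: since $d_t$ is correlated with $g_t$, one is forced to pass through the $L^1$ error $\E\norm{\grad f(x_{t-1})-g_t}$ rather than its square, so the growth condition only buys a $\sqrt{h_{t-1}}$ factor and Young's inequality must be used to trade it for $\tfrac{\gamma_t}{2}h_{t-1}$ at the cost of an $O(\gamma_t^2)$ residual — which is exactly why the batch size must grow like $t$ (resp.\ $td$) and why we land at $\epsilon^{-2}$ rather than the deterministic $\epsilon^{-1}$. The secondary difficulty in part~(b) is the bookkeeping required to drive $\nu$ down at the right $1/T$ rate so the estimator's bias fits inside the same $O(\gamma_t^2)$ budget, and to keep track of all the dimension- and $\rho$-dependent constants.
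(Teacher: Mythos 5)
Your proposal is correct and follows essentially the same route as the paper's proof: the same smoothness/LMO/convexity decomposition, the same recursion $\E[\phi_t]\le(1-\tfrac{2}{t+3})\E[\phi_{t-1}]+\tfrac{8(\rho+1)LD^2}{(t+3)^2}$ (and its zeroth-order analogue with the extra $\rho^{-1}$ from the bias and smoothing terms), and the same complexity accounting. The only cosmetic difference is in the cross term: you pass to the $L^1$ error via Cauchy--Schwarz and Jensen before applying Young's inequality, whereas the paper applies Young's inequality directly to $\langle \grad f(x_{t-1})-g_t, d_t-x^*\rangle$ with $\norm{d_t-x^*}\le D$ and then takes the conditional expectation of the squared error --- both yield identical constants, and your closing induction is equivalent to the paper's unrolling via its Lemma~\ref{gamma_recursive}.
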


The above oracle complexities in the first-order setting, match the results obtained by~\cite{yurtsever2019conditional, zhang2019one}. However, the above works require double-loop based variance reduction techniques which in turn require the stronger mean-square gradient-Lipschitz assumption. Furthermore, the use of the variance reduction technique results in the increased wall-clock running time of the algorithm. Our result here is applicable to the vanilla version of the stochastic conditional gradient method, as long as the problem satisfies the interpolation-like conditions observed in modern machine learning problems.

\subsection{Stochastic Conditional Gradient Sliding}
In this section, we analyze the complexity of the stochastic gradient sliding (SCGS) algorithm under the weak growth condition. The SCGS was first proposed and thoroughly analyzed in \cite{lan2016conditional}.  It is a fundamental modification of the conditional gradient algorithm that achieved improved oracle complexities without relying on any variance reduction techniques. Below, we show that under the interpolation-like assumptions in Section~\ref{sec:assumptions}, the oracle complexity of the SCGS could be further improved compared in both the first-order and zeroth-order methods.

\begin{minipage}{.45\textwidth}
\begin{algorithm}[H]
\caption{SCGS:\\Stochastic Conditional Gradient Sliding}
\begin{algorithmic}
    \STATE \textbf{Input:} $x_0\in \Omega$,  $T$,   $\beta_t\in\reals_+$, $\gamma_t\in [0,1]$,  $b_t$, $y_0=x_0$
    \FOR{$t = 1, 2, \dots, T$}
    	\STATE Set $z_t = (1-\gamma_t)x_{t-1}  + \gamma_t y_{t-1} $
    \STATE Compute the gradient $g_t$ as follows:
        \begin{itemize}[topsep=1pt, partopsep=1pt, itemsep=1pt,parsep=1pt]
        \item[]  Set $g_t = \Tilde{\nabla}_t$ (first-order).
        \item[] Set $g_t = \bar{G}_{\nu}^{t}$ (zeroth-order).
        \end{itemize}
       \STATE Solve
       \begin{itemize}[topsep=1pt, partopsep=1pt, itemsep=1pt,parsep=1pt]
        \item[]  $y_t = \text{ICG}(g_t, y_{t-1}, \beta_t, \eta_t)$ 
        \end{itemize} 
        by Algorithm~\ref{alg:ICG}
        \STATE Set $x_{t} = (1-\gamma_t)x_{t-1} + \gamma_t y_{t}$
    \ENDFOR
    \STATE \textbf{Output:} $x_T$
\end{algorithmic}
\label{alg:SFWSlideFandZ}
\end{algorithm}
\end{minipage}
\hfill
\begin{minipage}{.5\textwidth}
\begin{algorithm}[H]
\caption{ICG: \\Inexact Conditional Gradient Method}
\begin{algorithmic}
	  \STATE \textbf{Input:} $g, u, \beta, \eta,  u_1=u,  k=1$
      \STATE 1. Let $v_k$ be an optimal solution for the subproblem
      \begin{equation}\label{eq:acc_subprob}
      \max_{v\in\Omega} \{ h_k(v)=\inner{g+\beta(u_k-u), u_k-v} \}. 
      \end{equation}       
      \STATE 2. If $h_k(v_k) \leq \eta$, terminate and output $u_k$.
      \STATE 3. $u_{k+1} = (1-\alpha_k) u_k + \alpha_k v_k$ with
      $$\alpha_k = \min\bigg\{ 1, \frac{\inner{\beta(u-u_k) -g, v_t-u_t}}{\beta\norm{v_k - u_k}^2} \bigg\}.$$
      \STATE 4. Set $k\leftarrow k+1$ and go to step 1.
\end{algorithmic}
\label{alg:ICG}
\end{algorithm}
\end{minipage}

\begin{theorem}\label{thm:scgs-convex-wgc}
Consider solving problem~\eqref{eq:sopt}, by Algorithm~\ref{alg:SFWSlideFandZ}, under Assumption~\ref{ass:constrained} with $f$ being convex.
\begin{itemize}[leftmargin=0.2cm,align=left, topsep=1pt, partopsep=1pt, itemsep=1pt,parsep=1pt]
\item [(a)]  Assuming access to stochastic first-order oracle, under Assumption~ \ref{ass:WGC-variance},  setting 
\begin{equation*}
	\beta_t = \frac{4L}{t+2}, \quad \gamma_t = \frac{3}{t+2}, \quad \eta_t = \frac{LD^2}{t(t+1)},\quad b_t = \big\lceil 3\rho t(t+1)\big \rceil
\end{equation*}
we have
\begin{equation*}\label{eq:scgs}
	\E[f(x_t) - f(x^*)] \leq \frac{6LD^2}{(t+2)^2} + \frac{15LD^2+3\| \nabla f(x^*) \| D}{(t+1)(t+2)}.
\end{equation*}
Hence, the total number of calls to the stochastic first-order oracle and linear minimization oracle required to be
solved to find an $\epsilon$-optimal point of problem \eqref{eq:sopt} are, respectively, bounded by
\begin{equation*}
	\mathcal{O}\left(\epsilon^{-1.5}\right), \quad \mathcal{O}\left(\epsilon^{-1}\right).
\end{equation*}
\item [(b)]
Assuming access to stochastic zeroth-order oracle, in addition, with Assumption~\ref{ass:WGC-moment}, \ref{ass:smooth-F}, setting 
$$ \beta_t = \frac{4L}{t+2}, \quad \gamma_t = \frac{3}{t+2}, \quad \eta_t = \frac{LD^2}{t(t+1)}, \quad b_t = \big\lceil 6\rho(d+4) t(t+1)\big \rceil, \quad \nu = \frac{D}{(T+2)^2(d+6)^{3/2}},$$
we have
\begin{equation*}\label{eq:zscgs}
	\E[f(x_t) - f(x^*)] \leq \frac{8LD^2}{(t+2)^2} + \frac{32LD^2}{(t+1)(t+2)}.
\end{equation*}
Hence, the total number of calls to the stochastic zeroth-order oracle and linear minimization oracle required to be
solved to find an $\epsilon$-optimal point of problem \eqref{eq:sopt} are, respectively, bounded by
\begin{equation*}
	\mathcal{O}\left(d\epsilon^{-1.5}\right), \quad \mathcal{O}\left(\epsilon^{-1}\right).
\end{equation*}
\end{itemize}
\end{theorem}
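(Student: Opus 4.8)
\medskip\noindent\textbf{Proof plan.}\quad
The plan is to graft the variance control coming from the weak growth condition onto the accelerated (Nesterov--type) analysis of conditional gradient sliding in~\cite{lan2016conditional}. First I would record the guarantee of the inner routine: since $v_k$ maximizes $h_k$ over $\Omega$, the termination test $h_k(v_k)\le\eta$ forces the output $y_t=\mathrm{ICG}(g_t,y_{t-1},\beta_t,\eta_t)$ to be an $\eta_t$-approximate solution of the prox subproblem $\min_{y\in\Omega}\{\inner{g_t,y}+\tfrac{\beta_t}{2}\norm{y-y_{t-1}}^2\}$, i.e.\ $\inner{g_t+\beta_t(y_t-y_{t-1}),\,y_t-x}\le\eta_t$ for all $x\in\Omega$; and the Frank--Wolfe-on-a-strongly-convex-quadratic argument of~\cite{lan2016conditional} bounds the number of inner iterations by $\mathcal{O}(\beta_t D^2/\eta_t)$, which with the stated parameters equals $\mathcal{O}(t)$, so the total LMO cost over $T$ outer iterations is $\sum_{t\le T}\mathcal{O}(t)=\mathcal{O}(T^2)$.

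Next I would derive the one-step inequality. Writing $\delta_t:=g_t-\grad f(z_t)$ and using $L$-smoothness at $z_t$ with $x_t-z_t=\gamma_t(y_t-y_{t-1})$, convexity of $f$ at $x_{t-1}$ and at a generic $x$, the identity $(1-\gamma_t)(z_t-x_{t-1})+\gamma_t(z_t-x)=\gamma_t(y_{t-1}-x)$, the ICG bound, and the law of cosines, I expect
\begin{align*}
f(x_t)-f(x) &\le (1-\gamma_t)\big(f(x_{t-1})-f(x)\big) + \tfrac{\gamma_t\beta_t}{2}\big(\norm{y_{t-1}-x}^2-\norm{y_t-x}^2\big) \\
&\quad + \gamma_t\eta_t + \tfrac{\gamma_t}{2}(L\gamma_t-\beta_t)\norm{y_t-y_{t-1}}^2 - \gamma_t\inner{\delta_t,\,y_t-x}.
\end{align*}
The choice $\beta_t=4L/(t+2)>L\gamma_t$ makes the quadratic term negative and leaves a slack $\tfrac{\gamma_t}{2}(\beta_t-L\gamma_t)\norm{y_t-y_{t-1}}^2$ reserved for the noise. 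The delicate step --- the one I expect to be the main obstacle --- is handling $\inner{\delta_t,y_t-x^*}$: since $y_t$ is produced by $\mathrm{ICG}$ \emph{from} the noisy $g_t$, this term is not conditionally mean zero. I would split it as $\inner{\delta_t,y_{t-1}-x^*}+\inner{\delta_t,y_t-y_{t-1}}$; the first part has zero conditional mean ($y_{t-1},x^*$ are $\mathcal{F}_{t-1}$-measurable), and Young's inequality sends the second part into the smoothness slack, leaving the residual $\tfrac{\gamma_t}{2(\beta_t-L\gamma_t)}\norm{\delta_t}^2=\tfrac{3}{2L}\norm{\delta_t}^2$. Then Assumption~\ref{ass:WGC-variance} gives $\E[\norm{\delta_t}^2\mid\mathcal{F}_{t-1}]\le\tfrac{2\rho L}{b_t}(f(z_t)-f(x^*))$, and I would further use convexity to bound $f(z_t)-f(x^*)\le(1-\gamma_t)(f(x_{t-1})-f(x^*))+\gamma_t(f(y_{t-1})-f(x^*))$ with $f(y_{t-1})-f(x^*)\le\norm{\grad f(x^*)}D+\tfrac{L}{2}D^2$. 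The key point is that keeping the factor $\gamma_t$ in front of the non-vanishing part of $f(z_t)-f(x^*)$, together with the schedule $b_t=\lceil3\rho t(t+1)\rceil$ (so that $3\rho/b_t\le 1/(t(t+1))$ is summable), keeps the noise contribution at order $\mathcal{O}(1/T^2)$ rather than $\mathcal{O}(1/T)$.

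Collecting terms, with $a_t=\E[f(x_t)-f(x^*)]$ and $r_t=\E\norm{y_t-x^*}^2\le D^2$, I obtain a recursion of the shape $a_t\le(1-\gamma_t)(1+\tfrac{1}{t(t+1)})a_{t-1}+\tfrac{\gamma_t\beta_t}{2}(r_{t-1}-r_t)+\gamma_t\eta_t+\tfrac{\gamma_t}{t(t+1)}(\norm{\grad f(x^*)}D+\tfrac{L}{2}D^2)$. I would then solve it by the usual integrating-factor trick: with $\gamma_t=3/(t+2)$ one has $1-\gamma_t=(t-1)/(t+2)$, so multiplying by $\Gamma_t^{-1}$ with $\Gamma_t\propto 1/(t(t+1)(t+2))$ and summing works (the benign factors $1+\tfrac1{t(t+1)}$ only change constants since $\sum_t 1/(t(t+1))<\infty$; the $t=1$ step is handled separately because $\gamma_1=1$). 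Abel summation on the telescoping $r$-terms with $r_t\le D^2$, plus $\gamma_t\beta_t=12L/(t+2)^2$ and $\gamma_t\eta_t=3LD^2/((t+2)t(t+1))$, makes each contribution $\mathcal{O}(1/t^2)$ and reproduces the stated bound $\tfrac{6LD^2}{(t+2)^2}+\tfrac{15LD^2+3\norm{\grad f(x^*)}D}{(t+1)(t+2)}$. Reaching an $\epsilon$-optimal point needs $T=\mathcal{O}(\epsilon^{-1/2})$, which gives $\sum_{t\le T}b_t=\mathcal{O}(\rho T^3)=\mathcal{O}(\epsilon^{-3/2})$ SFO calls and $\mathcal{O}(T^2)=\mathcal{O}(\epsilon^{-1})$ LMO calls, proving (a).

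For part (b) I would rerun the identical argument with $g_t=\bar{G}_\nu^t$, the only differences being that $\delta_t$ is now biased, so I decompose it as a zero-mean part plus the Gaussian-smoothing bias $\grad f_\nu(z_t)-\grad f(z_t)$, whose norm is $\le\tfrac{\nu L}{2}(d+3)^{3/2}$ and which contributes only $\mathcal{O}(LD^2/T^3)$ for $\nu=D/((T+2)^2(d+6)^{3/2})$; and that for the second moment I would use the standard estimate $\E\norm{\bar{G}_\nu^t}^2-\norm{\grad f_\nu(z_t)}^2\le\tfrac1{b_t}\big(2(d+4)\E_\xi\norm{\grad F(z_t,\xi)}^2+\tfrac{\nu^2L^2}{2}(d+6)^3\big)$ --- this is exactly why the \emph{moment}-based Assumption~\ref{ass:WGC-moment} (together with Assumption~\ref{ass:smooth-F}) is required here, since the mean-square error of this biased estimator is governed by $\E_\xi\norm{\grad F}^2$ and not by its variance. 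With $b_t=\lceil6\rho(d+4)t(t+1)\rceil$ the $\mathcal{O}(1/(t(t+1)))$ coefficient in front of $f(z_t)-f(x^*)$ is restored, and since the moment condition forces $\grad f(x^*)=0$ by Proposition~\ref{prop:WGC}(a) the $\norm{\grad f(x^*)}$ term disappears, yielding $\tfrac{8LD^2}{(t+2)^2}+\tfrac{32LD^2}{(t+1)(t+2)}$ and, for $T=\mathcal{O}(\epsilon^{-1/2})$, $\sum_{t\le T}2b_t=\mathcal{O}(d\rho T^3)=\mathcal{O}(d\epsilon^{-3/2})$ zeroth-order queries and $\mathcal{O}(\epsilon^{-1})$ LMO calls. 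Throughout, the single genuine difficulty remains the coupling between $\delta_t$ and $y_t$ in Step two and the interplay of the weak growth bound with convexity that lets the minibatch schedule absorb the stochastic error at the deterministic $\mathcal{O}(1/T^2)$ rate.
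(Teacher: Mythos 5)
Your proposal is correct and follows essentially the same route as the paper's proof: the same one-step inequality from smoothness and convexity at $z_t$, the same use of the ICG termination condition with $x=x^*$, the same split $\inner{\delta_t,\,y_t-x^*}=\inner{\delta_t,\,y_{t-1}-x^*}+\inner{\delta_t,\,y_t-y_{t-1}}$ with Young's inequality absorbing the second piece into the $\norm{y_t-y_{t-1}}^2$ slack, the same bound $f(z_t)-f(x^*)\le(1-\gamma_t)\phi_{t-1}+\gamma_t(\norm{\grad f(x^*)}D+\tfrac{L}{2}D^2)$ feeding the weak growth condition, and the same parameter choices, induction, bias handling, and oracle counting in both parts. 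The only (immaterial) difference is bookkeeping: you fold the variance-induced $\phi_{t-1}$ contribution into the contraction factor before telescoping, whereas the paper keeps it inside the summed remainder and closes the induction on the final bound directly.
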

To the best of our knowledge, the above complexity of $\mathcal{O}(\epsilon^{-1.5})$ is not achieved for any variance reduced versions of stochastic Frank-Wolfe methods. This improvement is solely obtained by the SCGS algorithm of~\cite{lan2016conditional} under the interpolation-like assumptions which are natural in modern machine learning problems, without any variance reduction methods. We also highlight that, in the unconstrained setting,  the stochastic gradient method performs as well as its deterministic counterpart. However, the above result still falls short of the corresponding deterministic complexity of conditional gradient sliding, which is of the order $\mathcal{O}(\epsilon^{-0.5})$~\cite{lan2016conditional}. This highlights the intrinsic difficulty associated with projection-free methods for constrained stochastic optimization problems.

\section{Experiments}

\begin{figure}
    \begin{minipage}[t]{.65\linewidth}
        \centering
        \includegraphics[width=\textwidth]{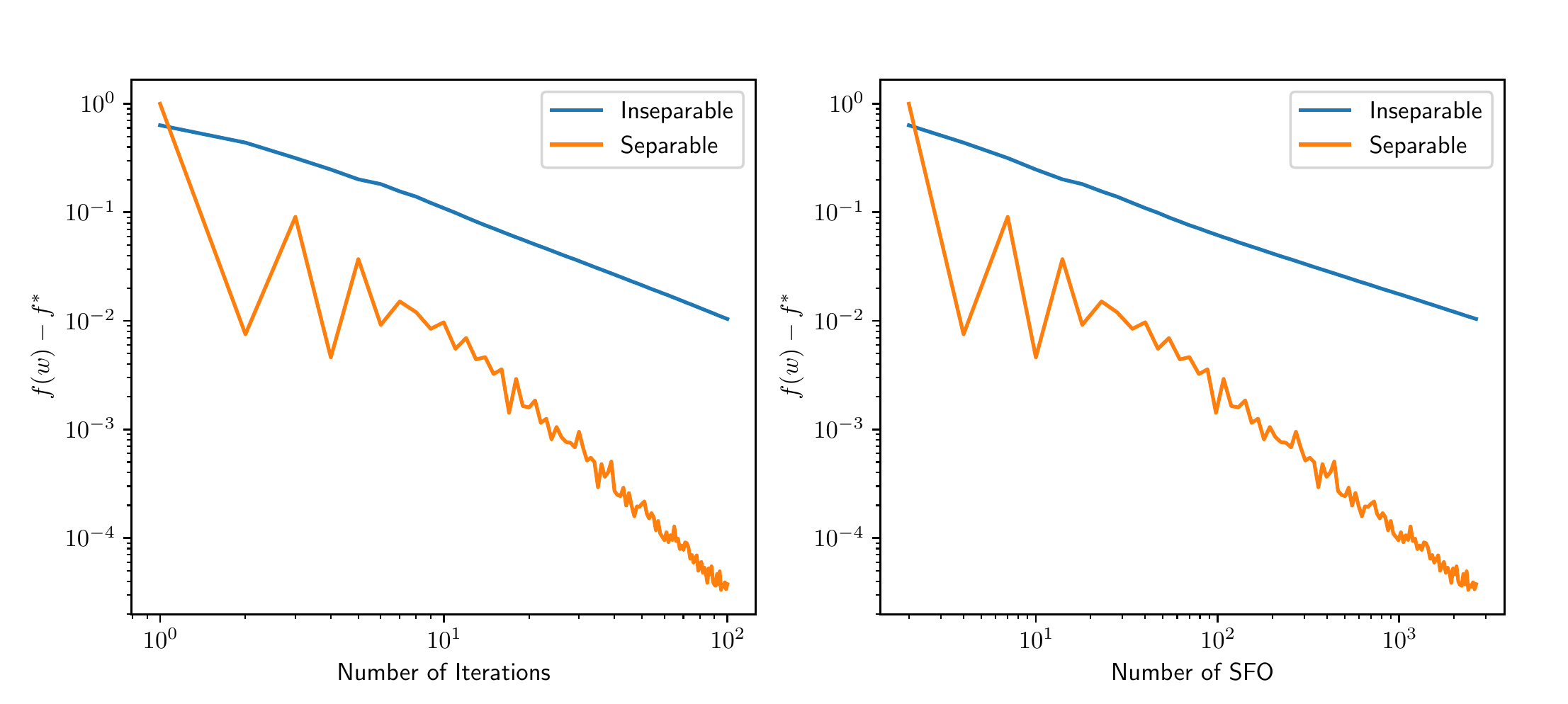}
    \end{minipage}
    \begin{minipage}[t]{.28\linewidth}
    	    \centering
        \includegraphics[width=\textwidth]{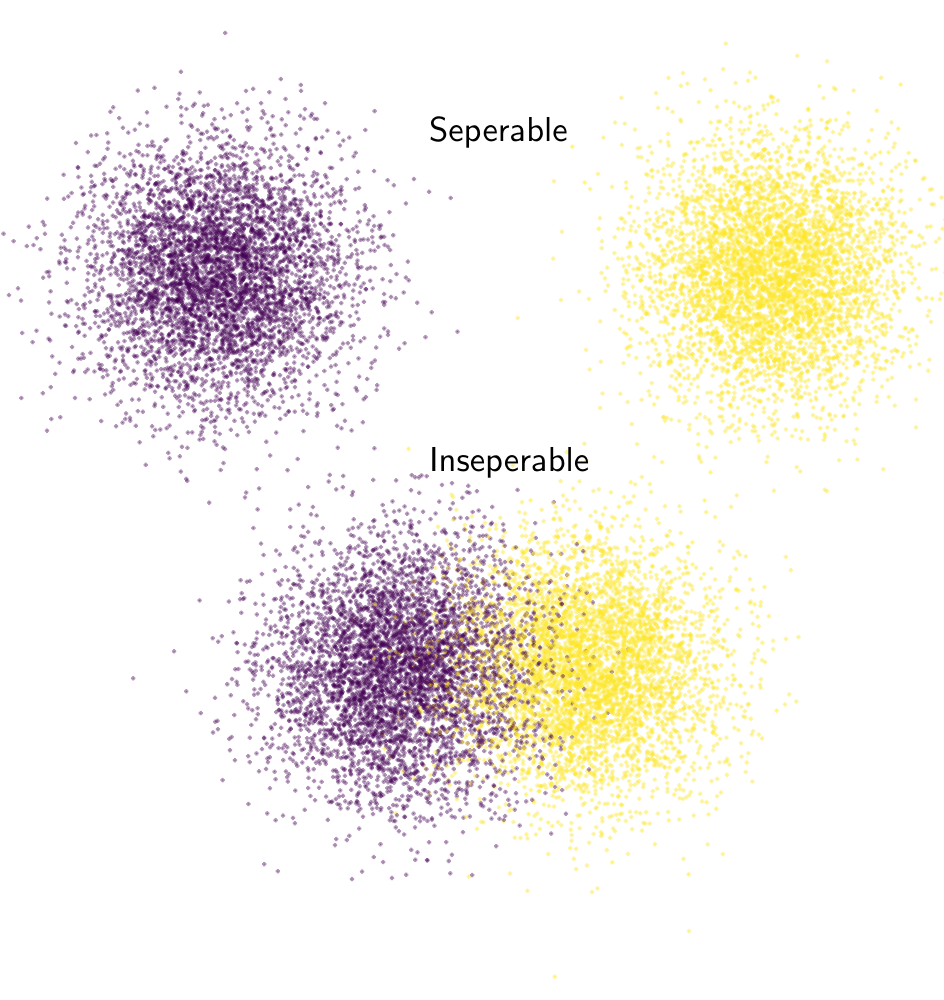}
    \end{minipage}
    \caption{The convergence behaviors of SFW for linearly (in)-separable data.  The right panel visualizes the first 2 dimensions of the synthetic data used for numerical analyses.}
     \label{fig:sfw}
\end{figure}

We generate synthetic binary classification datasets with two isotropic Gaussian blobs symmetric with respect to the origin, with the sample size $n = 100,000$ and the dimension $d=500$.  We ensure that two blobs are linearly separable with a positive margin for one dataset while the other has an overlap. We seek to find a hyperplane $w^\top x$ that minimizes the squared-hinge loss $f(w) = \frac{1}{n} \sum_{i=1}^{n} f_i(w)= \frac{1}{n} \sum_{i=1}^{n} \max (0,  1-y_i \cdot w^\top x_i)^2$ satisfying the constraint $\|w\|_1 \leq 1$.  Note that $f(w)$ satisfies the weak growth condition for linearly separable data in view of sampling only a mini-batch of gradient (with replacement) in each iteration, and the parameter $\rho = L_{\max}/L$; see Proposition 2 in \cite{vaswani2019fast}, and $L_{\max}$ is the largest Lipschitz constant for $\nabla f_i(w)$. In Figure \ref{fig:sfw},  we plot the suboptimality $f(w)-f^*$ versus the number of iterations and the number of calls to the SFO. The results are obtained by averaging over 100 runs with random initialization $w_0$. We observe that SFW converges essentially faster for linearly separable data than the inseparable case.

\section{Discussions}

We briefly discuss extensions of our results to the nonconvex setting. Our proposed assumption is motivated by the notion of Frank-Wolfe gap~\cite{DemRub70, hearn1982gap}, which is defined as $\mathcal{G}_f(x) = \max_{y\in \Omega}\langle \nabla f(x), x-y\rangle$. With this, a nonconvex function $f$ satisfies Constrained Growth Condition with constant $\rho$, if for any point $x\in \Omega$, $\E_\xi \norm{\grad F(x,\xi) - \grad f(x)}^2 \leq 2\rho L \mathcal{G}_f(x)$. Note that if $f$ is convex, then $\mathcal{G}_f (x)\geq f(x) - f(x^*)$. Hence, this generalizes Assumption~\ref{ass:WGC-variance} defined for the convex setting. Under this assumption in the nonconvex setting, it could be shown that the vanilla stochastic Frank-Wolfe algorithm can find an $\epsilon$-stationary point of the problem within at most $\mathcal{O}\left(1/\epsilon^3\right)$ and $\mathcal{O}\left(1/\epsilon^2\right)$ number of calls to the SFO linear subproblem solver, respectively. However, although existence of functions satisfying the above asssumption could be shown, it is not clear if practical nonconvex functions appearing in machine learning context satisfy it. It would be extremely interesting to examine this as future work.


\section*{Acknowledgements}
TX and KB were partially supported by a seed grant from the Center for Data Science and Artificial Intelligence Research,  UC Davis and NSF TRIPODS Grant-1934568.

\bibliography{bibtex}
\bibliographystyle{abbrv}

\clearpage

 { \hspace{1.72in} \bf {\normalsize SUPPLEMENTARY DOCUMENT}}

\appendix
\section{Proof for Theorem~\ref{thm:sfw-convex-wgc}}
In order to prove Theorem~\ref{thm:sfw-convex-wgc}, we require the following result from~\cite{nesterov2017random} for the zeroth-order case.
\begin{lemma}\textsc{\cite{nesterov2017random}}
Let the function $f$ has lipschitz continuous gradient with constant $L$. Consider the smoothed function $f_\nu (x) = \E_{u} [f(x+\nu u)]$ where $u\sim \normal(0, \mathbf{I}_d)$. Then for any $x\in\reals^d$, 
	\begin{align}
		\E_u \left[\frac{f(x+\nu u) - f(x)}{\nu}u\right] & = \grad f_\nu (x)\label{eq:nestrov-first-moment}\\
		\norm{ \grad f_{\nu}(x) - \grad f(x) } &\leq \frac{\nu}{2}L(d+3)^{\frac{3}{2}}\label{eq:nestrov-bias}\\
		\frac{1}{\nu^2} \E_u [\{f(x+\nu u) - f(x)\}^2 \norm{u}^2] &\leq \frac{\nu^2}{2}L^2 (d+6)^3 + 2(d+4) \norm{\grad f(x)}^2. \label{eq:nestrov-second-moment}
	\end{align}
\end{lemma}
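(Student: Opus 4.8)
The plan is to prove the three statements in turn, using only the Gaussian density $\phi(u) = (2\pi)^{-d/2} e^{-\norm{u}^2/2}$, its first two moments $\E_u[u]=0$ and $\E_u[uu^\top] = \mathbf{I}_d$, and the standard Gaussian absolute-moment estimate $\E_u[\norm{u}^p]\le (d+p)^{p/2}$ for $p\ge 2$. The backbone is to produce two equivalent representations of $\nabla f_\nu$. Differentiating $f_\nu(x)=\int f(x+\nu u)\phi(u)\,du$ under the integral sign gives the gradient-averaging form $\nabla f_\nu(x) = \E_u[\nabla f(x+\nu u)]$. Separately, writing $f_\nu(x) = \nu^{-d}\int f(y)\,\phi((y-x)/\nu)\,dy$ and differentiating in $x$ (equivalently, integrating by parts against the Gaussian score $\nabla_u\log\phi(u) = -u$) yields the function-value form
\[
\nabla f_\nu(x) = \frac{1}{\nu}\,\E_u\!\left[f(x+\nu u)\,u\right].
\]
Statement \eqref{eq:nestrov-first-moment} is then immediate: since $\E_u[u]=0$, subtracting $\frac{1}{\nu}\E_u[f(x)u]=0$ leaves $\nabla f_\nu(x) = \E_u\big[\tfrac{f(x+\nu u)-f(x)}{\nu}\,u\big]$.

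For statement \eqref{eq:nestrov-bias}, I would use both $\E_u[u]=0$ and $\E_u[uu^\top]=\mathbf{I}_d$ to center the function-value representation to second order:
\[
\nabla f_\nu(x) - \nabla f(x) = \frac{1}{\nu}\,\E_u\!\left[\big(f(x+\nu u)-f(x)-\nu\inner{\nabla f(x),u}\big)u\right],
\]
since $\frac1\nu\E_u[\nu\inner{\nabla f(x),u}u] = \E_u[uu^\top]\nabla f(x)=\nabla f(x)$. The $L$-Lipschitz gradient assumption gives the two-sided quadratic bound $\big|f(x+\nu u)-f(x)-\nu\inner{\nabla f(x),u}\big|\le \frac{L}{2}\nu^2\norm{u}^2$. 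Moving the norm inside the expectation and applying this bound leaves $\norm{\nabla f_\nu(x)-\nabla f(x)}\le \frac{L\nu}{2}\E_u[\norm{u}^3]$, and the moment estimate $\E_u[\norm{u}^3]\le (d+3)^{3/2}$ finishes \eqref{eq:nestrov-bias}.

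For statement \eqref{eq:nestrov-second-moment}, I would split the increment as $f(x+\nu u)-f(x) = R(u) + \nu\inner{\nabla f(x),u}$ with remainder $R(u)=f(x+\nu u)-f(x)-\nu\inner{\nabla f(x),u}$, and use $(a+b)^2\le 2a^2+2b^2$ to bound $\frac{1}{\nu^2}\{f(x+\nu u)-f(x)\}^2\norm{u}^2$ by $\frac{2}{\nu^2}R(u)^2\norm{u}^2 + 2\inner{\nabla f(x),u}^2\norm{u}^2$. The same quadratic bound gives $R(u)^2\le \frac{L^2}{4}\nu^4\norm{u}^4$, so the first term has expectation at most $\frac{L^2\nu^2}{2}\E_u[\norm{u}^6]\le \frac{\nu^2}{2}L^2(d+6)^3$. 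The second term is a pure Gaussian moment: by rotational invariance I may take $\nabla f(x)=\norm{\nabla f(x)}e_1$, whence $\E_u[\inner{\nabla f(x),u}^2\norm{u}^2] = \norm{\nabla f(x)}^2\,\E_u[u_1^2\norm{u}^2] = \norm{\nabla f(x)}^2(\E[u_1^4]+(d-1)) = (d+2)\norm{\nabla f(x)}^2$. Since $d+2\le d+4$, summing the two contributions yields exactly the claimed right-hand side.

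The routine parts are the algebraic splittings and the descent-lemma inequality; the only genuine technical ingredients are (i) justifying the interchange of differentiation and integration to obtain both representations of $\nabla f_\nu$ — this is where the integration-by-parts against the Gaussian score is used, and where smoothness and a mild growth control on $f$ are needed to tame the tails — and (ii) the Gaussian absolute-moment estimate $\E_u[\norm{u}^p]\le(d+p)^{p/2}$ at the exponents $p=3,6$ together with the exact cross-moment $\E_u[u_1^2\norm{u}^2]=d+2$. I expect the main obstacle to be a clean, self-contained proof of the moment bound $\E_u[\norm{u}^p]\le(d+p)^{p/2}$, which for even $p=2k$ follows from the closed form $\E_u[\norm{u}^{2k}]=\prod_{j=0}^{k-1}(d+2j)$ and for odd $p$ from interpolating (via Jensen) between adjacent even moments.
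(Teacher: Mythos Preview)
The paper does not prove this lemma; it is stated with a citation to \cite{nesterov2017random} and used as a black box in the subsequent analysis. Your proposal is a correct and complete proof, and it is in fact essentially the argument given in the cited reference: the two representations of $\nabla f_\nu$ via differentiation under the integral and via the Gaussian score identity, the second-order centering to expose the quadratic remainder $|R(u)|\le \tfrac{L\nu^2}{2}\norm{u}^2$, the $(a+b)^2\le 2a^2+2b^2$ split for the second-moment bound, and the Gaussian moment estimate $\E_u[\norm{u}^p]\le(d+p)^{p/2}$ are exactly the ingredients Nesterov and Spokoiny use. Your computation of the cross moment $\E_u[u_1^2\norm{u}^2]=d+2$ is also correct and slightly sharper than needed, since the lemma only asks for the constant $d+4$.
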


We now present the lemma below to bound the mean squared error for the zeroth-order gradient estimator. 

\begin{lemma}\label{lem:zero_grad}
Under Assumption \ref{ass:constrained}, \ref{ass:smooth-F}, \ref{ass:WGC-moment}, we have
\begin{align}
	\label{eq:zero_grad_var}	\E \norm{\bar{G}_{\nu}^{t} - \grad f_{\nu}(x_{t-1}) }^2 & \leq \frac{ 4\rho L(d+4)(f(x_{t-1}) - f(x^*)) }{b_t} + \frac{\nu^2 L^2 (d+6)^3} {2b_t},\\
	\label{eq:zero_grad_mse}	\E \norm{\bar{G}_{\nu}^{t} - \grad f(x_{t-1}) }^2 & \leq \frac{ 4\rho L(d+4)(f(x_{t-1}) - f(x^*)) }{b_t} + \nu^2 L^2 (d+6)^3.
\end{align}
\end{lemma}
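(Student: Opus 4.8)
\textbf{Proof proposal for Lemma~\ref{lem:zero_grad}.}

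The plan is to bound the mean squared error of the mini-batch zeroth-order estimator by first conditioning on the iterate $x_{t-1}$ and controlling the per-sample second moment, then using independence of the $b_t$ samples to get the $1/b_t$ factor, and finally invoking the moment-based weak growth condition (Assumption~\ref{ass:WGC-moment}) to convert $\norm{\grad F(x_{t-1},\xi)}^2$ into a multiple of the suboptimality gap $f(x_{t-1})-f(x^*)$. For the single-sample estimator $G_\nu^j := \frac{F(x_{t-1}+\nu u_j,\xi_j)-F(x_{t-1},\xi_j)}{\nu}u_j$, I would first note that $\E_{u_j}[G_\nu^j \mid \xi_j] = \grad F_\nu(x_{t-1},\xi_j)$ by \eqref{eq:nestrov-first-moment} applied to $F(\cdot,\xi_j)$ (which has Lipschitz gradient a.s.\ by Assumption~\ref{ass:smooth-F}), where $F_\nu(x,\xi) := \E_u[F(x+\nu u,\xi)]$; then, since $\E_{\xi_j}[\grad F_\nu(x_{t-1},\xi_j)] = \grad f_\nu(x_{t-1})$ by interchanging expectation and smoothing, the estimator $\bar G_\nu^t$ is unbiased for $\grad f_\nu(x_{t-1})$.

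For the variance bound \eqref{eq:zero_grad_var}: because the $b_t$ summands are i.i.d.\ and each has mean $\grad f_\nu(x_{t-1})$, we get $\E\norm{\bar G_\nu^t - \grad f_\nu(x_{t-1})}^2 = \frac{1}{b_t}\E\norm{G_\nu^1 - \grad f_\nu(x_{t-1})}^2 \leq \frac{1}{b_t}\E\norm{G_\nu^1}^2$. Applying \eqref{eq:nestrov-second-moment} to $F(\cdot,\xi_1)$ and then taking expectation over $\xi_1$ yields $\E\norm{G_\nu^1}^2 \leq \frac{\nu^2}{2}L^2(d+6)^3 + 2(d+4)\E_\xi\norm{\grad F(x_{t-1},\xi)}^2$, and Assumption~\ref{ass:WGC-moment} bounds the last term by $2\rho L(f(x_{t-1})-f(x^*))$, giving the stated inequality. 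For the MSE against the true gradient \eqref{eq:zero_grad_mse}, I would write $\bar G_\nu^t - \grad f(x_{t-1}) = (\bar G_\nu^t - \grad f_\nu(x_{t-1})) + (\grad f_\nu(x_{t-1}) - \grad f(x_{t-1}))$, expand the square, note the cross term vanishes in expectation by unbiasedness, bound the first squared term by \eqref{eq:zero_grad_var}, and bound the deterministic bias term by \eqref{eq:nestrov-bias}, i.e.\ $\norm{\grad f_\nu(x_{t-1})-\grad f(x_{t-1})}^2 \leq \frac{\nu^2}{4}L^2(d+3)^3 \leq \frac{\nu^2}{2}L^2(d+6)^3$; summing the $1/b_t$ variance term and this bias term and absorbing constants gives the claimed bound with $\nu^2 L^2(d+6)^3$ (using $b_t \geq 1$).

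The main obstacle I anticipate is bookkeeping the two-level randomness cleanly — ensuring that Stein-identity facts stated for a fixed deterministic function are correctly applied pointwise in $\xi$ and then averaged, and in particular justifying the interchange $\E_\xi \grad F_\nu(\cdot,\xi) = \grad f_\nu(\cdot)$ and $\E_\xi \norm{\grad F(x,\xi)}^2$ appearing where $\norm{\grad f(x)}^2$ would in the deterministic lemma. There is also a mild constant-chasing step in showing $(d+3)^3 \leq 2(d+6)^3$ and that the factor $2$ from \eqref{eq:nestrov-second-moment} combines with $\rho L$ from Assumption~\ref{ass:WGC-moment} to give exactly $4\rho L(d+4)$; these are routine but must be done carefully to match the stated constants.
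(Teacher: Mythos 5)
Your proposal is correct and follows essentially the same route as the paper's proof: unbiasedness of $\bar G_\nu^t$ for $\grad f_\nu(x_{t-1})$ via \eqref{eq:nestrov-first-moment}, the $1/b_t$ reduction to a single-sample second moment bounded through \eqref{eq:nestrov-second-moment} applied to $F(\cdot,\xi)$ and Assumption~\ref{ass:WGC-moment}, and a bias--variance decomposition with \eqref{eq:nestrov-bias} for \eqref{eq:zero_grad_mse}. Your extra care with the conditional expectation over $u_j$ given $\xi_j$ and the explicit check that $\tfrac{1}{2b_t}\nu^2L^2(d+6)^3+\tfrac{1}{4}\nu^2L^2(d+3)^3\le\nu^2L^2(d+6)^3$ only makes the constant-chasing, which the paper leaves implicit, more transparent.
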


\begin{proof}
First note that by \eqref{eq:nestrov-first-moment}, we have 
\begin{equation*}
	\E_{u,\xi} [\bar{G}_{\nu}^{t}] = \E_{u,\xi} [G_{t,j}] = \E_{u} \left[\frac{f(x_{t-1}+\nu u) - f(x_{t-1})}{\nu}u\right] = \grad f_\nu (x_{t-1}),
\end{equation*}

Then by using \eqref{eq:nestrov-second-moment} for $F$ instead of $f$, under Assumption \ref{ass:smooth-F}, \ref{ass:WGC-moment}, we can obtain
\begin{flalign*}
	\E_{u, \xi} \norm{\bar{G}_{\nu}^{t} - \grad f_{\nu}(x_{t-1}) }^2 & = \frac{1}{b_t} \E_{u,\xi} \norm{G_{t,j} - \grad f_{\nu}(x_{t-1})}^2\\
	&\leq \frac{1}{b_t} \E_{u, \xi} \norm {G_{t,j}}^2\\
	&\leq \frac{2(d+4)}{b_t}\E_{\xi} \norm{\grad F(x_{t-1}, \xi_{t,j})}^2 + \frac{\nu^2 L^2 (d+6)^3}{2b_t}\\
	&\leq  \frac{ 4\rho L(d+4)(f(x_{t-1}) - f(x^*)) }{b_t} + \frac{\nu^2 L^2 (d+6)^3} {2b_t}
\end{flalign*}
where the first inequality comes from the fact that the variance is less than the seocond moment. 

To prove \eqref{eq:zero_grad_mse}, we decompose the mean squared error into the bias and the variance by utilizing the results \eqref{eq:zero_grad_var} and \eqref{eq:nestrov-bias}, i.e.,
\begin{flalign*}
	&& \E \norm{\bar{G}_{\nu}^{t} - \grad f(x_{t-1}) }^2  & = \E \norm{\bar{G}_{\nu}^{t} - \grad f_{\nu}(x_{t-1}) }^2 + \norm{\grad f_{\nu}(x_{t-1}) - \grad f(x_{t-1})}^2\\
	&& &\leq \frac{ 4\rho L(d+4)(f(x_{t-1}) - f(x^*)) }{b_t} + \frac{\nu^2 L^2 (d+6)^3} {2b_t} + \frac{\nu^2L^2(d+3)^{3}}{4}\\
	&& &\leq  \frac{ 4\rho L(d+4)(f(x_{t-1}) - f(x^*)) }{b_t} + \nu^2 L^2 (d+6)^3.
\end{flalign*}
\end{proof}

We also need the following simple result in our proof.
\begin{lemma}\label{gamma_recursive}
Assume that sequences $\{\phi_t\}_{t \ge 0} \ge 0$, $\{B_t\}_{t \ge 1}$, $\{\theta_t\}_{t \ge 1} \in [0,1]$ are given such that
\begin{equation}\label{rec1}
\phi_t \le (1-\theta_t) \phi_{t-1} + B_t.
\end{equation}
Then, we have
\[
\phi_T \le \Theta_T \left[\phi_0 + \sum_{t=1}^ T \frac{B_t}{\Theta_t}\right],
\]
where, for any $t \ge 2$, 
\begin{equation}\label{def_gamma}
\Theta_t = \Theta_1 \prod_{k=2}^{t} (1-\theta_k), \quad \text{where} \ \ \Theta_1 = 1-\theta_1 \ \  \text{if} \ \ \theta_1<1, \quad \Theta_1 = 1 \ \  \text{if} \ \ \theta_1=1.
\end{equation}

\end{lemma}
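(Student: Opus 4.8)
\textbf{Proof proposal for Lemma~\ref{gamma_recursive}.}
The plan is the standard trick of unrolling the one-step recursion after normalizing by the products $\Theta_t$. The key algebraic observation is that, by the definition~\eqref{def_gamma}, one has $\Theta_t = (1-\theta_t)\,\Theta_{t-1}$ for every $t \ge 2$. Hence dividing the hypothesis~\eqref{rec1} through by $\Theta_t$ converts the contracting factor $(1-\theta_t)$ into a telescoping increment:
\[
\frac{\phi_t}{\Theta_t} \;\le\; \frac{\phi_{t-1}}{\Theta_{t-1}} + \frac{B_t}{\Theta_t}, \qquad t \ge 2.
\]

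First I would dispose of the base case $t=1$. If $\theta_1 < 1$, then $\Theta_1 = 1-\theta_1 > 0$, and dividing~\eqref{rec1} at $t=1$ by $\Theta_1$ gives $\phi_1/\Theta_1 \le \phi_0 + B_1/\Theta_1$ directly. If $\theta_1 = 1$, then $\Theta_1 = 1$ and~\eqref{rec1} at $t=1$ reads $\phi_1 \le B_1$; since $\phi_0 \ge 0$ this still yields $\phi_1/\Theta_1 = \phi_1 \le \phi_0 + B_1 = \phi_0 + B_1/\Theta_1$. So in both cases $\phi_1/\Theta_1 \le \phi_0 + B_1/\Theta_1$. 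Then I would sum the displayed telescoping inequality over $t = 2, \dots, T$, obtaining $\phi_T/\Theta_T - \phi_1/\Theta_1 \le \sum_{t=2}^T B_t/\Theta_t$, add the base-case inequality, and arrive at $\phi_T/\Theta_T \le \phi_0 + \sum_{t=1}^T B_t/\Theta_t$; multiplying through by $\Theta_T$ gives the claim. The cases $T=0$ (empty sum, nothing to prove) and $T=1$ (exactly the base case) are immediate.

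The only genuine subtlety — and precisely the reason the statement carries the two-case definition of $\Theta_1$ instead of the naive $\prod_{k=1}^t (1-\theta_k)$ — is the possibility $\theta_1 = 1$, which would make that naive product vanish and the normalization ill-defined; the base-case analysis above shows that taking $\Theta_1 = 1$ in this situation keeps every step legitimate, because the term $(1-\theta_1)\phi_0$ simply drops out of~\eqref{rec1}. Beyond this, for the divisions by $\Theta_t$ to be valid one needs $\theta_k < 1$ for all $k \ge 2$ (so that $\Theta_t > 0$); this holds in the intended applications (the choices of $\gamma_t, \beta_t$ in Theorems~\ref{thm:sfw-convex-wgc} and~\ref{thm:scgs-convex-wgc}), so it is something to note but not a real obstacle. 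In short, there is no hard part: the lemma is pure bookkeeping, with the $\theta_1=1$ corner case being the one point that requires a sentence of care.
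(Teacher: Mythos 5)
Your proof is correct and follows essentially the same route as the paper's (one-line) argument: divide the recursion by $\Theta_t$, use $\Theta_t = (1-\theta_t)\Theta_{t-1}$ to telescope, and sum from $t=1$ to $T$. Your explicit treatment of the $\theta_1=1$ corner case and the remark that $\theta_k<1$ is needed for $k\ge 2$ are careful additions, but they do not change the substance of the argument.
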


\begin{proof}
Dividing both sides of \eqref{rec1} by $\Theta_t$, summing them up from $t=1$ to $t=T$, noting non-negativity of $\phi_t$ and \eqref{def_gamma}, we obtain the result.
\end{proof}

\begin{proof}[Proof for Theorem \ref{thm:sfw-convex-wgc}] For convenience, let $g_t$ be the gradient estimator at $t$ step. Thus, $g_t = \tilde{\grad}_t$ for the first order method while in the zeroth order setting $g_t = \bar{G}_{\nu}^t$.
\begin{flalign*}
    f(x_t) &\leq f(x_{t-1}) +  \inner{\grad f(x_{t-1}), x_t-x_{t-1}} + \frac{L}{2} \norm{x_t - x_{t-1}}^2\\ 
    &= f(x_{t-1}) +  \gamma_t\inner{\grad f(x_{t-1}), d_t-x_{t-1}} + \frac{L\gamma_t^2}{2} \norm{d_t- x_{t-1}}^2 \\
    &\leq f(x_{t-1}) + \gamma_t \inner{g_t, d_t-x_{t-1}} + \gamma_t\inner{\grad f(x_{t-1}) - g_t, d_t-x_{t-1}} + \frac{L D^2\gamma_t^2 }{2} \\
    &\leq f(x_{t-1}) + \gamma_t \inner{g_t, x^*-x_{t-1}} + \gamma_t\inner{\grad f(x_{t-1}) - g_t, d_t-x_{t-1}} + \frac{L D^2\gamma_t^2 }{2} \\
    &=  f(x_{t-1}) + \gamma_t \inner{\grad f(x_{t-1}), x^*-x_{t-1}} + \gamma_t\inner{\grad f(x_{t-1}) - g_t, d_t-x^*} + \frac{L D^2\gamma_t^2 }{2}\\
    &\leq f(x_{t-1}) + \gamma_t (f(x^*)- f(x_{t-1})) + \gamma_t\inner{\grad f(x_{t-1}) - g_t, d_t-x^*} + \frac{L D^2\gamma_t^2 }{2} \\
    &\leq f(x_{t-1}) + \gamma_t (f(x^*)- f(x_{t-1})) + \frac{\gamma_t}{2\beta}\norm{\grad f(x_{t-1}) - g_t}^2 + \frac{ D^2\gamma_t(L\gamma_t + \beta) }{2}.
\end{flalign*}
The last inequality comes from the Young's inequality: for any $\beta >0$,
\begin{align*}
    \inner{\grad f(x_{t-1}) - g_t, d_t-x^*} &\leq \frac{1}{2\beta} \norm{\grad f(x_{t-1}) - g_t}^2 + \frac{\beta}{2} \norm{d_t-x^*}^2 \\
    &\leq \frac{1}{2\beta} \norm{\grad f(x_{t-1}) - g_t}^2 + \frac{D^2\beta}{2}.
\end{align*}

Denote $\phi_t = f(x_t) - f(x^*)$. Substracting $f(x^*)$ from both sides of the inequality and taking the conditional expectation $\E[ \cdot | \mathcal{F}_{t-1}]$, we have
\begin{equation}\label{eq:fw-proof}
    \E[\phi_t | \mathcal{F}_{t-1}]  \leq (1-\gamma_t)\phi_{t-1} + \frac{\gamma_t}{2\beta} \E [\norm{\grad f(x_{t-1}) - g_t}^2 | \mathcal{F}_{t-1}] + \frac{D^2 \gamma_t (L\gamma_t + \beta)}{2}.
\end{equation}
 For the first-order gradient estimator $g_t = \tilde{\grad}_t$, we have the following bound for its variance under Assumption \ref{ass:WGC-variance}:
\begin{equation*}
	\E [\norm{\grad f(x_{t-1}) - \tilde{\grad}_t}^2 | \mathcal{F}_{t-1}] = \frac{1}{b_t} \E[\norm{\grad f(x_{t-1}) - \grad F(x_{t-1}, \xi_{t,j})}^2| \mathcal{F}_{t-1}]\leq \frac{2\rho L\phi_{t-1}}{b_t}.
\end{equation*}
Then by \eqref{eq:fw-proof}, we can obtain
\begin{equation*}
    \E[\phi_t|\mathcal{F}_{t-1}]\leq (1-\gamma_t)\phi_{t-1} + \frac{\gamma_t\rho L}{\beta b_t} \phi_{t-1} + \frac{D^2 \gamma_t (L\gamma_t + \beta)}{2}.
\end{equation*}
Let $\gamma_t = \frac{4}{t+3}, \beta = \rho L \gamma_t = \frac{4\rho L}{t+3}> 0, b_t = \lceil(t+3)/2\rceil$, then
\begin{equation}\label{eq:induction}
     \E[\phi_t | \mathcal{F}_{t-1}] \leq\left(1-\frac{2}{t+3}\right) \phi_{t-1} + \frac{8(\rho+1)LD^2}{(t+3)^2}.
\end{equation}

Now, letting $\theta_t = \frac{2}{t+3}$, it is easy to check that $\Theta_t = \frac{6}{(t+2)(t+3)}$ due to \eqref{def_gamma}. Hence, in the view of Lemma~\ref{gamma_recursive}, we have
\[
\E[\phi_t ] \leq \frac{6\phi_0}{(t+2)(t+3)} + \frac{8(\rho+1)LD^2}{t+3} \le \frac{2[\phi_0+4(\rho+1)LD^2]}{t+3}.
\]
The above inequality implies that to attain an $\epsilon$-optimal point, the total number of interations $T$ can be bounded by $\mathcal{O}(1/\epsilon).$ Hence, the number of the gradient calls $\sum_{t=1}^T b_t$ can be bounded by $\frac{T^2+7T}{4} = \mathcal{O}(T^2)$, and the number of calls to the linear minimization oracle immediately follows from this observation.

\noindent We now prove part (b). For the zeroth-order version, by \eqref{eq:zero_grad_mse} in Lemma \ref{lem:zero_grad} and \eqref{eq:fw-proof}, we can obtain
\begin{align*}
\E[\phi_t | \mathcal{F}_{t-1}] &\leq (1-\gamma_t)\phi_{t-1} + \frac{\gamma_t}{2\beta} \E [\norm{\grad f(x_{t-1}) - \bar{G}_\nu^t}^2 | \mathcal{F}_{t-1}] + \frac{D^2 \gamma_t (L\gamma_t + \beta)}{2}\\
    &\leq (1-\gamma_t)\phi_{t-1} + \frac{2\gamma_t\rho L(d+4)}{\beta b_t} \phi_{t-1} +  \frac{\gamma_t\nu^2 L^2 (d+6)^3}{2\beta} +\frac{D^2 \gamma_t (L\gamma_t + \beta)}{2}
\end{align*}
Let $\gamma_t = \frac{4}{t+3}, \beta = \gamma_t \rho L, b_t = (t+3)(d+4), \nu =  D(T+3)^{-1}(d+6)^{-3/2}\leq  D(t+3)^{-1}(d+6)^{-3/2}$, then we have
\begin{equation*}
\E[\phi_t | \mathcal{F}_{t-1}] \leq \left(1-\frac{2}{t+3}\right)\phi_{t-1}  + \frac{8(\rho+\rho^{-1}+1)}{(t+3)^2 LD^2}
\end{equation*}
Similarly, in the view of Lemma~\ref{gamma_recursive}, we obtain
\[
\E[f(x_t) - f(x^*)] \leq \frac{2[f(x_0) - f(x^*)]+8(\rho+\rho^{-1}+1)LD^2}{t+3}.
\]
The above inequality implies that to attain an $\epsilon$-optimal point, the total number of interations $T$ can be bounded by $\mathcal{O}(1/\epsilon).$ Hence, the number of calls to the zeroth-order oracles $2\sum_{t=1}^T b_t$ can be bounded by $(d+4)(T^2+7T) = \mathcal{O}(dT^2)$, and the number of calls to the linear minimization oracle immediately follows from this observation.
\end{proof}

\section{Proof of Theorem~\ref{thm:scgs-convex-wgc}}

\begin{proof} 
[Proof of Theorem \ref{thm:scgs-convex-wgc}] 
For convenience, let $g_t$ be the gradient estimator at $t$ step. Thus, $g_t = \tilde{\grad}_t$ for the first order method while in the zeroth order setting $g_t = \bar{G}_{\nu}^t$. First note that by the updates in Algorithm~\ref{alg:SFWSlideFandZ}, the convexity and the smoothness of $f$, we have

\begin{align}
	f(x_{t})  \leq & f(z_t) + \inner{\nabla f(z_t), x_t-z_t} +\frac{L}{2}\norm{x_t-z_t}^2\nonumber\\
	= &(1-\gamma_t) [f(z_{t})  +\inner{\nabla f(z_t), x_{t-1} - z_t }]+\gamma_t [f(z_{t}) + \inner{\nabla f(z_t), y_t-z_{t}}] +\frac{L\gamma_t^2}{2}\norm{y_t-y_{t-1}}^2\nonumber \\
	\leq & (1-\gamma_t) f(x_{t-1}) +\gamma_t [f(z_{t}) + \inner{\nabla f(z_t), y_t-z_{t}}] +\frac{L\gamma_t^2}{2}\norm{y_t-y_{t-1}}^2 \nonumber \\
	= &(1-\gamma_t) f(x_{t-1}) +\gamma_t [f(z_{t}) + \inner{\nabla f(z_t), y_t-z_{t}}] +\frac{\beta_t\gamma_t}{2}\norm{y_t-y_{t-1}}^2\nonumber \\
	& \quad - \frac{\gamma_t(\beta_t - L\gamma_t)}{2}\norm{y_t-y_{t-1}}^2. \label{eq:scgs-proof-1}
\end{align}

And by~\eqref{eq:acc_subprob}, we have
\begin{equation*}
\inner{g_t + \beta_t(y_t-y_{t-1}), y_t -x }\leq \eta_t, \quad \forall x\in\Omega.
\end{equation*}
Let $x=x^*$ in the above inequality. Then we have
\begin{align}
\frac{1}{2}\norm{y_t- y_{t-1}}^2 & = \frac{1}{2}\norm{y_{t-1}-x^*}^2 - \inner{y_{t-1}-y_{t}, y_t-x^*} - \frac{1}{2}\norm{y_t-x^*}^2 \nonumber\\
&\leq \frac{1}{2}\norm{y_{t-1}-x^*}^2 + \frac{1}{\beta_t}\inner{g_t, x^*-y_t} - \frac{1}{2}\norm{y_t-x^*}^2 + \frac{\eta_t}{\beta_t}. \label{eq:scgs-proof-2}
\end{align}

Denoting $\delta_t = g_t- \nabla f(z_t)$ and combining \eqref{eq:scgs-proof-1} and \eqref{eq:scgs-proof-2}, we obtain
\begin{align*}
	f(x_t)	 \leq & (1-\gamma_t) f(x_{t-1}) + \gamma_t f(x^*) + \gamma_t \inner{\delta_t, x^*-y_t}  \\
	&+ \frac{\beta_t\gamma_t}{2}(\norm{y_{t-1}-x^*}^2 -\norm{y_{t}-x^*}^2) +  \eta_t\gamma_t -\frac{\gamma_t}{2} (\beta_t - L\gamma_t) \norm{y_t - y_{t-1}}^2\\
	 = & (1-\gamma_t) f(x_{t-1}) + \gamma_t f(x^*)  + \frac{\beta_t\gamma_t}{2}(\norm{y_{t-1}-x^*}^2 -\norm{y_{t}-x^*}^2)+\eta_t\gamma_t\\
	 &+\gamma_t \inner{\delta_t, x^*-y_{t-1}} + \gamma_t \inner{\delta_t, y_{t-1}-y_t}  - \frac{\gamma_t}{2}(\beta_t - L\gamma_t)\norm{y_{t}-y_{t-1}}^2\\
	 \leq &(1-\gamma_t) f(x_{t-1}) + \gamma_t f(x^*)  + \frac{\beta_t\gamma_t}{2}(\norm{y_{t-1}-x^*}^2 -\norm{y_{t}-x^*}^2)+\eta_t\gamma_t\\
	 &+\gamma_t \inner{\delta_t, x^*-y_{t-1}} + \frac{\gamma_t\norm{\delta_t}^2}{2(\beta_t-L\gamma_{t})},
\end{align*}
where the last inequality comes from the fact that
\begin{equation*}
	\gamma_t \inner{ \delta_t, y_{t-1} - y_t }  \leq   \frac{\gamma_t}{2(\beta_t - L\gamma_t)} \norm{ \delta_t}^2 + \frac{\gamma_t(\beta_t - L\gamma_t)}{2}  \norm{y_t - y_{t-1}}^2.
\end{equation*}

Substracting $f(x^*)$ from both sides of the above inequality, denoting $\phi_t = f(x_t) - f(x^*)$, $\theta_t=\gamma_t$, and in the view of Lemma~\ref{gamma_recursive}, we obtain
\begin{equation}\label{eq:scgs-proof-basic-ineq}
	\phi_t \leq \Theta_t\left[\phi_0 +\sum_{k=1}^{t} \frac{B_k}{\Theta_k}\right],
\end{equation}
where
\begin{align*}
B_t &= \frac{\beta_t\gamma_t}{2}(\norm{y_{t-1}-x^*}^2 -\norm{y_{t}-x^*}^2)+\eta_t\gamma_t +\gamma_t \inner{\delta_t, x^*-y_{t-1}} + \frac{\gamma_t\norm{\delta_t}^2}{2(\beta_t-L\gamma_{t})}.
\end{align*}
Choosing $\gamma_t=\theta_t = \frac{3}{t+2}$, we can easily check that $\Theta_t = \frac{6}{t(t+1)(t+2)}$ due to \eqref{def_gamma}. 
Moreover, letting $\beta_t = \frac{4L}{t+2}, \eta_t = \frac{LD^2}{t(t+1)}$, we have $\sum_{k=1}^{t} \frac{\eta_k\gamma_k}{\Theta_k}\leq \frac{tLD^2}{2}$ and

\begin{align*}
	& \sum_{k=1}^{t} \frac{\beta_k\gamma_k}{\Theta_k} (\norm{y_{t-1}-x^*}^2 -\norm{y_{t}-x^*}^2) \nonumber \\
	& \leq \frac{\beta_1\gamma_1}{\Theta_i} \norm{y_0 - x^*}^2 + \sum_{k=2}^{t} \bigg( \frac{\beta_k\gamma_k}{\Theta_k} - \frac{\beta_{k-1}\gamma_{k-1}}{\Theta_{k-1}} \bigg) \norm{y_{t-1}-x^*}^2\\
	&\leq \frac{\beta_1\gamma_1}{\Theta_i} D^2 + \sum_{k=2}^{t} \bigg( \frac{\beta_k\gamma_k}{\Theta_k} - \frac{\beta_{k-1}\gamma_{k-1}}{\Theta_{k-1}} \bigg) D^2 = \frac{\beta_t\gamma_t D^2}{\Theta_t} = \frac{2LD^2t(t+1)}{t+2},
\end{align*}
where the last inequality comes from the fact $\frac{\beta_k\gamma_k}{\Theta_k} > \frac{\beta_{k-1}\gamma_{k-1}}{\Theta_{k-1}}$.\\

We now prove part (a). Let $g_t = \tilde{\nabla}_t$. Taking expectation for both sides of \eqref{eq:scgs-proof-basic-ineq}, and noting that $	\E [\inner{\delta_t, x^*-y_{t-1}}]  = 0$ and
\begin{flalign*}
&&\E[\norm{\delta_t}^2|\mathcal{F}_{t-1}] & \leq \frac{2\rho L}{b_t} (f(z_t) - f(x^*)) \quad  \rhd \text{ by Assumption \ref{ass:WGC-variance}}\\
&& & \leq \frac{2\rho L}{b_t} \bigg( (1-\gamma_t) \phi_{t-1} + \gamma_t (f(y_{t-1})  - f(x^*) )  \bigg) \quad \rhd \text{$z_t = (1-\gamma_t)x_{t-1}  + \gamma_t y_{t-1} $}\\
&& & \leq \frac{2\rho L}{b_t} \bigg( (1-\gamma_t) \phi_{t-1} + \gamma_t(\|\nabla f(x^*) \|D + \frac{LD^2}{2})  \bigg) \quad \rhd \text{ by the smoothness}\\
&& & := \frac{2\rho L}{b_t} \bigg( (1-\gamma_t) \phi_{t-1} + \gamma_t \frac{K LD^2}{2}  \bigg),\quad \rhd  K = \frac{\| \nabla f(x^*) \|}{LD} +1
\end{flalign*}
we can obtain
\begin{equation*}
	\E [\phi_t] \leq \frac{6LD^2}{(t+2)^2} + \frac{3LD^2}{(t+1)(t+2)} + \frac{3}{t(t+1)(t+2)} \sum_{k=1}^{t}\frac{\rho k(k+1)\bigg( (k-1)\E [\phi_{k-1}]  + \frac{3KLD^2}{2}\bigg)}{b_k}
\end{equation*}

We now prove
\begin{equation}\label{eq:scgs-ind}
    	\E [\phi_t]  \leq \frac{6LD^2}{(t+2)^2} + \frac{(12+3K)LD^2}{(t+1)(t+2)}
\end{equation}
by induction. Set $b_k = \big\lceil 3\rho k(k+1)\big \rceil$. It is easy to check $\E[\phi_0] \leq \frac{KLD^2}{2} $ by the smoothness of $f$ which satisfies \eqref{eq:scgs-ind}. If \eqref{eq:scgs-ind} holds for all $k\leq t-1$, then with the above inequality we can obtain
\begin{align*}
	\E [\phi_t] &\leq \frac{6LD^2}{(t+2)^2} + \frac{(3+\frac{3K}{2})LD^2}{(t+1)(t+2)} + \frac{1}{t(t+1)(t+2)} \sum_{k=1}^{t}(k-1)\E [\phi_{k-1}]\\
	&\leq \frac{6LD^2}{(t+2)^2} + \frac{(3+\frac{3K}{2})LD^2}{(t+1)(t+2)} + \frac{1}{t(t+1)(t+2)} \sum_{k=1}^{t}\bigg( \frac{6LD^2(k-1)}{(k+1)^2} + \frac{(12+3K)LD^2(k-1)}{k(k+1)} \bigg)\\
	&\leq \frac{6LD^2}{(t+2)^2} + \frac{(3+\frac{3K}{2})LD^2}{(t+1)(t+2)} + \frac{(18+3K) LD^2}{t(t+1)(t+2)} \sum_{k=1}^{t} \frac{1}{k+1} \\
	&\leq \frac{6LD^2}{(t+2)^2} + \frac{(12+3K)LD^2}{(t+1)(t+2)},
\end{align*}
i.e., \eqref{eq:scgs-ind} holds for $k=t$. Therefore, to achieve an $\epsilon$-optimal point, the number of outer iterations $T$ can be bounded by $\mathcal{O}(1/\sqrt{\epsilon})$. Hence, the number of calls to the first order oracles can be bounded by
\begin{equation*}
	\sum_{t=1}^T b_t \leq 3\rho\sum_{t=1}^T t(t+1) = \rho T(T+1)(T+2) =\mathcal{O}(T^3).
\end{equation*}
Due to the fact that the inner iterations indeed solves a convex constrained optimization problem by the classical Frank-Wolfe method with the exact line search, one can show that the number of inner iterations $N_t$ performed at the $t$-th out iteration can be bounded by
\begin{equation*}
	N_t \leq \left\lceil \frac{6\beta_tD^2}{\eta_t} \right\rceil = \mathcal{O}(t).
\end{equation*}
Thus, the number of calls to the linear minimization oracle can be bounded by
\begin{equation*}
	\sum_{t=1}^T N_t \leq \mathcal{O}(T^2).
\end{equation*}

We now prove part (b). Let $g_t = \bar{G}_\nu^t$. Notice that $\bar{G}_\nu^t$ is a biased estimator of $\nabla f(z_t)$. We can obtain the following results by \eqref{eq:nestrov-bias}:
\begin{align*}
	\E[ \inner{\delta_t, x^* - y_{t-1}} ] &= \E[ \inner{\nabla f_{\nu}(z_t) - \nabla f(z_t), x^*- y_{t-1}}] + \E[ \inner{\bar{G}_\nu^t - \nabla f_{\nu}(z_t), x^*- y_{t-1}}]\\
	& =  \E[ \inner{\nabla f_{\nu}(z_t) - \nabla f(z_t), x^*- y_{t-1}}]\leq \frac{\nu LD(d+3)^{3/2}}{2}.
\end{align*}
Besides, we can obtain a similar bound for $\E[\norm{\delta_t}^2]$ by Lemma \ref{lem:zero_grad}.
\begin{align*}
	\E[\norm{\delta_t}^2|\mathcal{F}_{t-1}] &\leq  \frac{ 4\rho L(d+4)(f(z_{t}) - f(x^*)) }{b_t} + \nu^2 L^2 (d+6)^3\\
	&\leq  \frac{ 4\rho L(d+4)\big( (1-\gamma_t) \phi_{t-1} + \frac{LD^2\gamma_t}{2}  \big) }{b_t} + \nu^2 L^2 (d+6)^3.
\end{align*}
where the last inequality is slightly different from the one for the first-order setting due to  $\| \nabla f(x^*) \| = 0$ for convex cases under the moment-based WGC.

By \eqref{eq:scgs-proof-basic-ineq}, we have the following simplified inequality:
\begin{align*}
	\E[\phi_t] \leq & \frac{6LD^2}{(t+2)^2} + \frac{3LD^2}{(t+1)(t+2)} + \frac{\nu LD(d+3)^{3/2}}{2} + \frac{3\nu^2L(d+6)^3}{2} \\&~~+ \frac{6}{t(t+1)(t+2)} \sum_{k=1}^{t}\frac{\rho(d+4) k(k+1)\bigg( (k-1)\E [\phi_{k-1}]  + \frac{3LD^2}{2}\bigg)}{b_k}.
\end{align*}
Set $b_k = \lceil 6\rho k(k+1)(d+4) \rceil, \nu =\frac{D}{(T+2)^2(d+6)^{3/2}}\leq \frac{D}{(t+2)^2(d+6)^{3/2}}$. Then we have
\begin{equation*}
	\E[\phi_t] \leq \frac{8LD^2}{(t+2)^2} + \frac{12LD^2}{(t+1)(t+2)} + \frac{1}{t(t+1)(t+2)} \sum_{k=1}^{t} (k-1)\E [\phi_{k-1}].
\end{equation*}
Similar to the proof for part (a), we can finish the proof by induction and obtain the bounds for complexity.
\end{proof}

\section{Zeroth-order SGD under Growth Conditions}\label{sec:sgdproofs}

In this section, we highlight that one can extend the results in \cite{vaswani2019fast} only assuming access to stochastic zeroth-order oracle with corresponding variance-based growth conditions. Notice that both SGC and WGC are defined in the format of the relative shrinkage of $\E \|\nabla F(x,\xi) \|^2$. However, in the unconstrained setting, the corresponding variance-based versions are equivalent to the moment-based growth conditions (see Proposition~\ref{prop:WGC} for WGC; for SGC, note that $\E \| \nabla F(x, \xi) - \nabla f(x)  \|^2 = \E \| \nabla F(x, \xi) \|^2 - \|  \nabla f(x) \|^2 =(\rho-1)\| \nabla f(x) \|^2$).

We present the following result for the zeroth-order setting which directly follows the proofs in \cite{vaswani2019fast}. We highlight that it is the zeroth-order version of Theorem 3 in \cite{vaswani2019fast}. Similar results for other setups considered in~\cite{vaswani2019fast} can also be obtained for the zeroth-order setting.
\begin{algorithm}[h]
\caption{Non-convex Zeroth-order SGD (ZO-SGD)}
\begin{algorithmic}
 \STATE \textbf{Input:} $x_0\in \Omega$, number of iterations $T$, $\eta$
    \FOR{$t = 1, 2, \dots, T$}
        \STATE  Randomly pick $\xi_{t}$ and compute
        $$x_{t} = x_{t-1} - \eta \frac{F(x_{t-1} + \nu u_t, \xi_t ) - F(x_{t-1}, \xi_t)}{\nu} u_{t} := x_{t-1} - \eta G_{t}.  $$
        where $u_{t}$ is generated from $\normal(0, \mathbf{I}_d)$.
    \ENDFOR
    \STATE \textbf{Output:} $x_R$ where $R$ is uniformly distributed over ${0, \dots , T-1}$
\end{algorithmic}
\label{alg:ZO-SGD}
\end{algorithm}

\begin{theorem}
Consider solving the non-convex unconstrained $L$-smooth problem by Algorithm~\ref{alg:ZO-SGD} with some appropriate constant step size $\eta$, if $f$ satistifies SGC with constant $\rho$, then
\begin{equation*}
    \E \| \grad f(x_R) \|^2  \leq \mathcal{O}\left(\frac{1}{T}\right)
\end{equation*}
\end{theorem}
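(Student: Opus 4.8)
The plan is to transcribe the nonconvex SGD argument of \cite{vaswani2019fast} (their Theorem~3) into the zeroth-order setting, the only new ingredients being control of the bias and of the inflated second moment of the Gaussian-smoothed estimator $G_t$. First I would apply the descent lemma: by $L$-smoothness of $f$ and the update $x_t = x_{t-1}-\eta G_t$,
\[
f(x_t) \le f(x_{t-1}) - \eta\inner{\grad f(x_{t-1}), G_t} + \frac{L\eta^2}{2}\norm{G_t}^2 .
\]
Taking $\E[\,\cdot\mid\mathcal{F}_{t-1}]$ and using $\E[G_t\mid\mathcal{F}_{t-1}] = \grad f_\nu(x_{t-1})$ from \eqref{eq:nestrov-first-moment}, the linear term splits as $-\eta\norm{\grad f(x_{t-1})}^2 - \eta\inner{\grad f(x_{t-1}),\, \grad f_\nu(x_{t-1})-\grad f(x_{t-1})}$; by \eqref{eq:nestrov-bias} and Young's inequality the second piece is at most $\tfrac{\eta}{4}\norm{\grad f(x_{t-1})}^2 + \tfrac{\eta\nu^2 L^2(d+3)^3}{4}$.

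For the quadratic term I would use \eqref{eq:nestrov-second-moment} applied to $F$ — legitimate under Assumption~\ref{ass:smooth-F} — to get $\E[\norm{G_t}^2\mid\mathcal{F}_{t-1}] \le \tfrac{\nu^2}{2}L^2(d+6)^3 + 2(d+4)\,\E_\xi\norm{\grad F(x_{t-1},\xi)}^2$, and then the SGC bound $\E_\xi\norm{\grad F(x_{t-1},\xi)}^2 \le \rho\,\norm{\grad f(x_{t-1})}^2$. Substituting and collecting the $\norm{\grad f(x_{t-1})}^2$ terms yields, for an absolute constant $C$,
\[
\E[f(x_t)\mid\mathcal{F}_{t-1}] \le f(x_{t-1}) - \Big(\tfrac{3\eta}{4} - L\eta^2(d+4)\rho\Big)\norm{\grad f(x_{t-1})}^2 + C\,\nu^2 d^3\,(\eta + \eta^2).
\]
Choosing the constant step size so that $L\eta(d+4)\rho\le\tfrac14$ (e.g.\ $\eta = 1/(4L(d+4)\rho)$) makes the coefficient of $\norm{\grad f(x_{t-1})}^2$ at least $\eta/2$.

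Finally I would take total expectation, telescope over $t=1,\dots,T$, use $f(x_0)-\E f(x_T)\le f(x_0)-f^*$ with $f^* := \inf_x f(x)$, and divide by $T\eta/2$, obtaining
\[
\frac{1}{T}\sum_{t=1}^{T}\E\norm{\grad f(x_{t-1})}^2 \;\le\; \frac{2(f(x_0)-f^*)}{T\eta} + 2C\,\nu^2 d^3\,(1+\eta).
\]
Since $x_R$ is uniform over $\{x_0,\dots,x_{T-1}\}$, the left-hand side equals $\E\norm{\grad f(x_R)}^2$; choosing any $\nu = O\!\big(d^{-3/2}T^{-1/2}\big)$ (say $\nu = D/(d^{3/2}\sqrt{T})$) makes the smoothing term $O(1/T)$, giving $\E\norm{\grad f(x_R)}^2 \le O(1/T)$ with the hidden constant absorbing $L,\rho,d$ and $f(x_0)-f^*$. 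The step I expect to be the main obstacle is the bookkeeping for the estimator's bias: one must arrange that both the cross-term $\eta\inner{\grad f,\grad f_\nu-\grad f}$ and the $\nu$-dependent part of $\E\norm{G_t}^2$ are driven down by the choice of $\nu$ \emph{without} letting $\nu$ depend on the unknown $\norm{\grad f(x_{t-1})}$, which is exactly what the Young's inequality split above is designed to achieve; the rest is a routine copy of the first-order proof in \cite{vaswani2019fast}.
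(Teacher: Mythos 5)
Your proposal is correct and follows essentially the same route as the paper's proof: descent lemma, unbiasedness of $G_t$ for $\grad f_\nu$, the bias bound \eqref{eq:nestrov-bias} with a Young's-inequality split of the cross term, the second-moment bound \eqref{eq:nestrov-second-moment} applied to $F$ combined with SGC, a constant step size of order $1/(L\rho d)$, and telescoping with $\nu$ chosen to make the smoothing error $\mathcal{O}(1/T)$. The only differences are cosmetic: you absorb the bias cross term into $\tfrac{\eta}{4}\norm{\grad f(x_{t-1})}^2$ rather than into an $\eta^2$-weighted term as the paper does, and your $\nu = \mathcal{O}(d^{-3/2}T^{-1/2})$ is a slightly more careful (and equally valid) choice than the paper's $\mathcal{O}(1/\sqrt{dT})$.
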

\begin{proof-idea}
The zeroth-order SGD update is given by $$x_{t} = x_{t-1} - \eta \frac{F(x_{t-1} + \nu u_t, \xi_t ) - F(x_{t-1}, \xi_t)}{\nu} u_{t} := x_{t-1} - \eta G_{t}.  $$

By the smoothness of $f$, we have
\begin{align*}
	f(x_{t}) - f(x_{t-1}) &\leq \langle \nabla f(x_{t-1}, x_t - x_{t-1}) + \frac{L}{2}\| x_t -x_{t-1} \|^2\\
	&= -\eta \langle \nabla f(x_{t-1}) , G_t \rangle + \frac{L\eta^2}{2} \| G_t \|^2
\end{align*}
 Consider the term $\langle \nabla f(x_{t-1}) , G_t \rangle$. Taking expectation with respect to $\xi_t,u_t$, we have
\begin{align*}
	\E [\langle \nabla f(x_{t-1}) , G_t \rangle ]  &= \langle \nabla f(x_{t-1}), \nabla f_\nu (x_{t-1}) \rangle\\
	& = \langle \nabla f(x_{t-1}), \nabla f(x_{t-1}) + \nabla f_\nu (x_{t-1}) - \nabla f(x_{t-1}) \rangle\\
	&\geq \| \nabla f(x_{t-1}) \|^2  - \frac{\nu L(d+3)^{3/2}}{2}\| \nabla f(x_{t-1}) \|
\end{align*} 
 Consider the term $\| G_t \|^2$. Taking expectation with respect to $\xi_t,u_t$, we have
\begin{align*}
	\E \| G_t \|^2 &\leq \frac{\nu^2}{2}L^2 (d+6)^3 + 2(d+4) \E \| \nabla F(x_{t-1}, \xi_t) \|^2\\
	&\leq \frac{\nu^2}{2}L^2 (d+6)^3 + 2\rho (d+4) \| \nabla f(x_{t-1}) \|^2
\end{align*} 
Then, by the above inequalities, we can obtain
\begin{align*}
	&\E [ f(x_t) - f(x_{t-1}) ] \\
	&\leq -\eta \| \nabla f(x_{t-1}) \|^2 + \eta\frac{\nu L(d+3)^{3/2}}{2}\| \nabla f(x_{t-1}) \| +\eta^2 L\rho(d+4) \| \nabla f(x_{t-1}) \|^2 + \eta^2\frac{\nu^2}{4}L^3(d+6)^2\\
	&\leq  -\eta \| \nabla f(x_{t-1}) \|^2 + \eta^2L(d+3) \| \nabla f(x_{t-1}) \|^2 + \frac{\nu^2 L(d+3)^2}{16}  \\ &\quad\quad +\eta^2 L\rho(d+4) \| \nabla f(x_{t-1}) \|^2 + \eta^2\frac{\nu^2}{4}L^3(d+6)^2\\
	&\leq -\eta \| \nabla f(x_{t-1}) \|^2 + \eta^2L(\rho+1)(d+4) \| \nabla f(x_{t-1}) \|^2 + \eta^2\frac{\nu^2}{4}L^3(d+6)^2 + \frac{\nu^2 L(d+3)^2}{16}.
\end{align*}
If $\eta = \frac{1}{2L(\rho+1)(d+4)}$, then we have
\begin{align*}
	\E [ f(x_t) - f(x_{t-1}) ] \leq -\frac{\eta}{2}  \| \nabla f(x_{t-1}) \|^2 + \eta^2\frac{\nu^2}{4}L^3(d+6)^2 + \frac{\nu^2 L(d+3)^2}{16}\\
	\Rightarrow  \| \nabla f(x_{t-1}) \|^2 \leq \frac{2}{\eta}\E [ f(x_{t-1}) - f(x_{t}) ] + \eta\frac{\nu^2}{2}L^3(d+6)^2 + \frac{\nu^2 L(d+3)^2}{8\eta}
\end{align*}
Setting $\nu = \mathcal{O}(1/\sqrt{dT})$ and taking a telescoping sum of the above inequality, we can get the same $\mathcal{O}(1/T)$ rate for the non-convex setting.
\end{proof-idea}
 In the above proof, we did not pay careful attention to the exact constants of the tuning parameter, as our main point is to simply highlight it is possible to obtain a zeroth-order version of the results in~\cite{vaswani2019fast} under variance-based growth conditions and the logic of the proof is the same as~\cite{vaswani2019fast}.

\end{document}